\begin{document}
\title[Neumann $p$-Laplacian problems]
{Neumann $p$-Laplacian problems with a reaction term on metric spaces}

\author[A. Nastasi]
{Antonella Nastasi}

\address{Antonella Nastasi \newline
University of Palermo, Department of Mathematics and Computer Science, Via Archirafi 34, 90123, Palermo, Italy}
\email{antonella.nastasi@unipa.it}

\subjclass[2010]{primary 31E05; secondary 30L99, 46E35}
\keywords{$p$-Laplacian operator; measure metric spaces; minimal $p$-weak upper gradient; minimizer.}

\begin{abstract}
We use a variational approach to study existence and regularity of solutions for a Neumann $p$-Laplacian problem with a reaction term on metric spaces
equipped with a doubling measure and supporting a Poincaré inequality. 
Trace theorems for functions with bounded variation are applied in the definition of the variational functional and minimizers are shown to satisfy De Giorgi type conditions.
\end{abstract}

\maketitle
\numberwithin{equation}{section}
\newtheorem{theorem}{Theorem}[section]
\newtheorem{lemma}[theorem]{Lemma}
\newtheorem{proposition}[theorem]{Proposition}
\newtheorem{remark}[theorem]{Remark}
\newtheorem{definition}[theorem]{Definition}
\newtheorem{corollary}[theorem]{Corollary}
\allowdisplaybreaks

\section{Introduction}
The goal of this paper is to extend existence and regularity results for a Neumann boundary value problem valid on the Euclidean setting and, more generally, in Riemannian manifolds (see \cite{N}) to the general setting of metric spaces. 
The Neumann boundary value problem driven by a $p$-Laplacian operator is
\begin{equation}\label{P}
	\begin{cases}
		-\Delta_p u=g &\quad \mbox{in } \Omega, \\
		-|\nabla u|^{p-2}\partial_{\eta}u=f &\quad \mbox{on } \partial\Omega, 
	\end{cases}
\end{equation}
where $1<p<\infty$, $\Omega \subset \mathbb{R}^N$, $g$ is a continuous function and $\partial_{\eta}u$ is the directional derivative of $u$ in the direction of the outer normal to $\partial \Omega$.
The weak formulation of the problem is to find $u\in W^{1,p}(\Omega)$ such that
$$\int_{\Omega}|\nabla u(x)|^{p-2}\nabla u(x)\nabla \varphi(x)dx - \int_{\partial\Omega}\varphi(x) f(x)d\mathcal{H}^{n-1}(x)=\int_{\Omega}g(u(x))\varphi(x)dx,$$ for all $\varphi \in W^{1,p}(\Omega)$. 

Thus, solving $(\ref{P})$ reduces to look for critical points of the $p$-energy functional
$$J(u)= \int_{\Omega}|\nabla u|^{p}dx - \int_{\Omega}Gdx +\int_{\partial\Omega}uf d\mathcal{H}^{n-1},$$

where $G$ is a primitive of $g$. Minimizers of $J$ are solutions of the Neumann boundary value problem with the reaction term $G$ and boundary data $f$.

%The aim of this paper is to study $p$-Laplacian problems with Neumann boundary values on metric spaces. The solution of differential problems, driven by a $p$-Laplacian operator, was widely investigated over the last fifty years. In fact, considering various types of differential operators, it is possible to model the dynamical behaviour of real phenomena in chemical, biological and physical applications. There is a large literature on this topic, where it is possible to find more details on the methods of calculus of variations (see, for example, \cite{Motreanu} and the references therein).

%Solving boundary value problems with prescribed boundary data corresponds to finding critical points of the Euler equation associated to the problem.
We apply variational methods such as those based on De Giorgi classes \cite{D} to consider $(\ref{P})$ in the setting of metric spaces.
The study of Sobolev spaces without a differentiable structure and of boundary value problems in metric measure spaces has attracted a lot of attention, see \cite{BB, BBS, C, FHK, H,HK, HK1, HKS, KM, KS, LMS, M, S}. In these papers, it is shown that the Sobolev spaces can be defined without using partial derivatives and a theory for $p$-Laplacian problems can be developed. %%%%%%%%%%%%%%%%%%%%%%%%%%%%%%%%%%%%%%%%%%%%%%%%%%%%%%%%%%
%There exists extensive literature on $p$-Laplacian problem in metric measure spaces, see for example \cite{KS, LMS, MS} and references therein.
 In particular, Kinnunen and Shanmugalingam \cite{KS} discuss regularity properties for the solutions of a Dirichlet problem in metric measure spaces. They use the De Giorgi method to prove that, if the space is equipped with a doubling measure and supports a Poincaré inequality, minimizers of the $p$-energy functional are locally H\"{o}lder continuous and they satisfy the Harnack inequality and the maximum principle. The study of Sobolev spaces on metric spaces can be applied in several areas of analysis, for example, calculus on Riemannian manifolds, subelliptic operators associated with vector fields, potential theory on graphs and weighted Sobolev spaces.

The energy functional for the Neumann problem involves an integral of the trace of a function of bounded variation, see \cite{HKLL,L, LS, M}. 
Recently, in the setting of metric measure spaces, Durand-Cartagena and  Lemenant \cite{DL},
 Lahti, Mal\'{y} and Shanmugalingam \cite{LMS} and Mal\'{y} and Shanmugalingam \cite{MS} have studied a Neumann problem obtaining existence and some regularity results of solutions. We study a Neumann boundary value problem as in \cite{LMS} and \cite{MS}, but the new feature is that we include a reaction term. Under appropriate conditions on the reaction term, we prove existence and boundedness properties of solutions  with a reaction term in a metric space equipped with a doubling measure and supporting a Poincaré inequality and thus extending the corresponding results in \cite{KS} and \cite{MS}.

%%%%%%%%%%%%%%%%%%%%%%%%%%%%%%%%%%%%%%%%%%%%%%%%%%%%%%%%%%

 After an introduction and some useful notions which constitute the mathematical background (Sections 1 and 2), we obtain existence of a solution and a weaker uniqueness property (Section 3), that is the minimal $p$-weak upper gradients of the solutions of a Neumann boundary value problem with boundary data $f$ and reaction term $G$ are different at most on a set of measure zero. In Section 4, we prove that minimizers of this Neumann $p$-Laplacian problem satisfy a De Giorgi type inequality and consequently we give boundedness properties for them. Finally, in the last section we prove that minimizers of the Neumann $p$-Laplacian problem with zero boundary data are in the De Giorgi class. This permits us to conclude that the regularity results contained in \cite{KS} in the absence of a reaction term still hold true adding the reaction term $G$.

\section{Mathematical background}\label{Sec2}

Let $(X, d, \mu)$ be a metric measure space, where $\mu$ is a Borel regular measure. Let $B(x,\rho)\subset X$ be a ball with the center $x \in X$ and the radius $\rho>0$. For a measurable set $S \subset X$ of finite positive measure and for a measurable function $u: S \to \mathbb{R}$, we denote $$u_S= \dfrac{1}{\mu(S)}\int_{S} u d\mu.$$ We denote constants appearing in this paper with $K$, even if they assume different values.  
\begin{definition}[\cite{BB}, Section 3.1]
	A measure $\mu$ on $X$ is said to be doubling if there exists a constant $K$, called the doubling constant, such that 
	\begin{equation*}\label{doubling}
	0<\mu(B(x,2\rho))\leq K \mu(B(x,\rho))< +\infty,
	\end{equation*} for all $x \in X$ and $\rho>0$.
\end{definition}

\begin{definition}[\cite{BB}, Definition 1.13]
A non negative Borel measurable function $g$ is said to be an upper gradient of function $u: X \to [-\infty, +\infty]$ if, for all compact rectifiable arc lenght parametrized paths $\gamma$ connecting $x$ and $y$, we have
\begin{equation}\label{ug}
|u(x)-u(y)|\leq \int_{\gamma}g\, ds
\end{equation}
 whenever $u(x)$ and $u(y)$ are both finite and $\int_{\gamma}g \, ds= +\infty$ otherwise.
\end{definition}
As we have pointed out in the Introduction, the notion of upper gradient has been introduced in order to satisfy the lack of a differentiable structure.
We note that if $g$ is an upper gradient of function $u$ and $\phi$ is a non negative Borel measurable function, then $g+\phi$ is still an upper gradient of $u$. In order to overcome this aspect, we use the following notions that will lead to the definition of the minimal $p$-weak upper gradient of $u$.
\begin{definition}[\cite{BB}, Definition 1.33]
Let $p \in [1, +\infty[$. Let $\Gamma$ be a family of paths in $X$. We say that $$\inf_\phi \int_X \phi^p d\mu$$
is the $p$-modulus of $\Gamma$, where the infimum is taken among all non negative Borel measurable functions $\phi$ satisfying   $\int_\gamma \phi \, ds \geq 1,$ for all rectifiable paths $\gamma\in\Gamma$.
\end{definition}
Using the notion of $p$-modulus, we can deduce the following definition of $p$-weak uppper gradient.
\begin{definition}[\cite{BB}, Definition 1.32]
If \eqref{ug} is satisfied for $p$-almost all paths $\gamma$ in $X$, that is the set of non constant paths that do not satisfy \eqref{ug} is of zero $p$-modulus, then $g$ is said a $p$-weak uppper gradient of $u$.
\end{definition}
Clearly, every upper gradient is a weak upper gradient. The family of weak upper gradients satisfy the result contained in the following theorem concerning the existence of a minimal element.
\begin{theorem}[\cite{BB}, Theorem 2.5]\label{mpwug}
Let $p \in ]1, +\infty[$. Suppose that $u\in L^p(X)$ has an $L^p(X)$ integrable $p$-weak upper gradient. Then there exists a $p$-weak upper gradient, denoted with $g_u$, such that $g_u\leq g$ $\mu$-a.e. in $X$, for each $p$-weak upper gradient $g$ of $u$. This $g_u$ is called the minimal $p$-weak uppper gradient of $u$.
\end{theorem}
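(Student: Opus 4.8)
The plan is to obtain $g_u$ by the direct method of the calculus of variations, minimizing the functional $g\mapsto\|g\|_{L^p(X)}$ over the class $\mathcal{G}$ of all $L^p(X)$-integrable $p$-weak upper gradients of $u$. By hypothesis $\mathcal{G}\neq\emptyset$, so $\lambda:=\inf_{g\in\mathcal{G}}\|g\|_{L^p(X)}<+\infty$; fix a minimizing sequence $(g_j)\subset\mathcal{G}$ with $\|g_j\|_{L^p(X)}\to\lambda$. Two structural facts about $p$-weak upper gradients will do the work: a lattice property (the pointwise minimum of two $p$-weak upper gradients of $u$ is again a $p$-weak upper gradient of $u$) and stability under $L^p$-convergence.

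For the lattice property, one first notes that if $g$ is a $p$-weak upper gradient of $u$, then the family of nonconstant rectifiable paths having a subpath along which \eqref{ug} fails still has zero $p$-modulus (a zero $p$-modulus family of subpaths inflates to a zero $p$-modulus family of paths). Hence for $p$-almost every path $\gamma$, parametrized by arc length on $[0,\ell]$, both $g_1$ and $g_2$ satisfy \eqref{ug} along $\gamma$ and along every subpath; this forces $u\circ\gamma$ to be absolutely continuous on every subinterval on which the relevant upper-gradient integral is finite, with $|(u\circ\gamma)'|\le g_i\circ\gamma$ a.e., hence $|(u\circ\gamma)'|\le\min(g_1,g_2)\circ\gamma$ a.e., and integrating yields $|u(x)-u(y)|\le\int_\gamma\min(g_1,g_2)\,ds$ (the cases where some integral is infinite being reduced to subintervals or trivial). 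Thus $\min(g_1,g_2)$ is a $p$-weak upper gradient of $u$. By induction $\tilde g_j:=\min\{g_1,\dots,g_j\}\in\mathcal{G}$, the sequence $(\tilde g_j)$ is nonincreasing, $\|\tilde g_j\|_{L^p(X)}\le\|g_j\|_{L^p(X)}$, and $\|\tilde g_j\|_{L^p(X)}\ge\lambda$ because $\tilde g_j\in\mathcal{G}$; hence $\|\tilde g_j\|_{L^p(X)}\to\lambda$.

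Set $g_u:=\inf_j\tilde g_j=\lim_j\tilde g_j\ge0$ (choosing a Borel representative). Since $0\le\tilde g_j\le\tilde g_1=g_1\in L^p(X)$, dominated convergence gives $\tilde g_j\to g_u$ in $L^p(X)$ and $\|g_u\|_{L^p(X)}=\lambda$. To see that $g_u$ is a $p$-weak upper gradient of $u$, apply Fuglede's lemma to this $L^p$-convergent sequence: along a subsequence one has $\int_\gamma|\tilde g_{j_k}-g_u|\,ds\to0$ for $p$-almost every $\gamma$, and together with $|u(x)-u(y)|\le\int_\gamma\tilde g_{j_k}\,ds$ this gives $|u(x)-u(y)|\le\int_\gamma g_u\,ds$ for $p$-almost every $\gamma$; so $g_u\in\mathcal{G}$ and it attains $\lambda$. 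For minimality, let $g$ be an arbitrary $p$-weak upper gradient of $u$ (not assumed $L^p$-integrable). By the lattice property $\min(g,g_u)$ is a $p$-weak upper gradient of $u$, and $0\le\min(g,g_u)\le g_u\in L^p(X)$, so $\min(g,g_u)\in\mathcal{G}$; therefore $\lambda\le\|\min(g,g_u)\|_{L^p(X)}\le\|g_u\|_{L^p(X)}=\lambda$. Equality of the norms together with $\min(g,g_u)\le g_u$ forces $\min(g,g_u)=g_u$ $\mu$-a.e., i.e.\ $g_u\le g$ $\mu$-a.e., which is the claimed minimality; $\mu$-a.e.\ uniqueness of $g_u$ is then immediate.

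I expect the main obstacle to be the lattice property, i.e.\ making rigorous the claim that $\min(g_1,g_2)$ is a $p$-weak upper gradient: this requires the stability of the upper gradient inequality under passing to subpaths for $p$-almost every path, the absolute continuity of $u\circ\gamma$ with the correct bound on its derivative, and some care with paths on which some of the integrals are infinite. By contrast, the remaining steps — extracting a nonincreasing minimizing sequence, passing to the limit by dominated convergence, upgrading the limit to a $p$-weak upper gradient via Fuglede's lemma, and reading off minimality from the norm equality — are the routine part of the direct method. As an alternative to the monotone-truncation step one could instead exploit the uniform convexity of $L^p(X)$ for $1<p<\infty$ to show directly that the minimizing sequence is Cauchy, but the argument above avoids this and also delivers the minimality statement cleanly.
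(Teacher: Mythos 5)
The paper does not prove this statement at all: it is quoted verbatim as background from the reference [BB, Theorem~2.5], so there is no in-paper argument to compare against. Your proposal is correct and is essentially the standard proof given in that reference: the lattice lemma (that $\min(g_1,g_2)$ is again a $p$-weak upper gradient, proved via restriction to subpaths and absolute continuity of $u\circ\gamma$ along $p$-almost every path), the monotone truncation of a minimizing sequence, dominated convergence, Fuglede's lemma to upgrade the $L^p$-limit to a $p$-weak upper gradient, and the norm-equality argument for minimality; you also correctly identify the lattice property as the only step requiring real care.
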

We note that $g_u$ is $\mu$-a.e. uniquely determinated by $u$. We also note that in the Euclidean setting, $g_u$ of Theorem \ref{mpwug}  assumes exactly the classical meaning of the modulus of the gradient of $u$. 

\begin{definition}[\cite{BB}, Definition 4.1]
	Let $p \in [1, +\infty[$. 
	A metric measure space $X$ supports a $(1, p)$-Poincar\'{e} inequality if there exist $K>0$ and $\lambda \geq 1 $ such that 
	\begin{equation*}
	\dfrac{1}{\mu(B(x,r))}\int_{B(x,r)} |u-u_{B(x,r)}|d\mu\leq K r \left(\dfrac{1}{\mu(B(x,\lambda r))}\int_{B(x,\lambda r)}g_u^p \, d\mu\right)^{\frac{1}{p}}
	\end{equation*}
	for all balls $B(x,r) \subset X$ and for all $u \in L^1_{loc}(X)$.
\end{definition}
 
Let $X$ be a complete metric space equipped with a doubling measure supporting a $(1, p)$-Poincaré inequality.
We recall the concept of Newtonian space, which is based on the notion of minimal $p$-weak uppper gradient. 
\begin{definition}
	The Newtonian space $N^{1,p}(X)$ is defined by $$N^{1,p}(X)=V^{1,p}(X)\cap L^p (X), \quad p\in [1,+\infty],$$ where
	$ V^{1,p}(X) = \{u: u \ \mbox {is measurable and }  g_u\in L^p (X) \}.$
	We consider $N^{1,p}(X)$ equipped with the norm 
	\begin{equation*}  \label{norm}
	\|u\|_{N^{1,p}(X)}=\|g_u\|_{L^p (X)} + \|u\|_{L^p (X)}.
	\end{equation*} 
We denote with $N^{1,p}_*(X)= \{u\in N^{1,p}(X):\int_{X}u \,dx=0\}$.
\end{definition}
The Newtonian space $N^{1,p}(X)$ is a complete normed vector space, which generalizes the Sobolev space $W^{1,p}(\Omega)$ to a metric setting.
	\begin{definition}[see \cite{M1}]
		A Borel set $E \subset X$ is said to be of finite perimeter if there exists a sequence $\{u_n\}_{n\in \mathbb{N}}$ in $N^{1,1}(X)$ such that $u_n \to \chi_E$ in $L^1(X)$ and $$\liminf_{n\to +\infty}\int_X g_{u_n} d\mu<\infty.$$ The perimeter $P_E(X)$ of $E$ is the infimum of the above limit among all sequences $\{u_n\}$ as above.
		For an open set $U\subset X$, the perimeter of $E$ in $U$ is $$P_E(U)=\inf\left\{\liminf_{n\to +\infty}\int_X g_{u_n}d\mu: \{u_n\}_{n\in \mathbb{N}} \subset N^{1,1}(U), u_n \to \chi_{E\cap U} \mbox{ in } L^1(U) \right\}.$$
	\end{definition}
We note that $E$ is a set of finite perimeter iff $\chi_{E}$ is a BV$(U)$ function (see \cite{M1}, Definition 4.1).

From now on, we consider a bounded domain (non empty, connected open set) $\Omega$ in $X$ with $X \setminus \Omega$ of positive measure such that $\Omega$ is of finite perimeter with perimeter measure $P_{\Omega}$. Let $f: \partial \Omega \to \mathbb{R}$ be a bounded $P_{\Omega}$-measurable function with $\int_{\partial \Omega}f dP_{\Omega}=0$.
\begin{lemma}[\cite{BB}, Lemma 3.3]\label{lemm3.3}
	Let $(\Omega, d, \mu)$ be a metric measure space with $\mu$ doubling. Then there is $s>0$ such that
	\begin{equation}\label{s}
	\dfrac{\mu(B(y,\rho))}{\mu(B(x, R))}\geq K\left(\dfrac{\rho}{R}\right)^s
	\end{equation}
	for all $\rho\in ]0, R]$, $x \in {\Omega}$, $y \in B(x, R)$ and some $K>0$.
\end{lemma}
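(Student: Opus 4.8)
The inequality is a standard lower mass bound (``reverse doubling'' type estimate) that follows from the doubling condition alone; the plan is to reconstruct it by iteration, using neither connectedness of $\Omega$ nor the Poincaré inequality. Denote by $K_D\geq 1$ the doubling constant of $\mu$, so that $\mu(B(z,2r))\le K_D\,\mu(B(z,r))$ for every $z\in X$ and $r>0$, and hence, by induction on $k$,
$$\mu(B(z,2^k r))\le K_D^{\,k}\,\mu(B(z,r)),\qquad\text{equivalently}\qquad \mu(B(z,r))\ge K_D^{-k}\,\mu(B(z,2^k r)).$$

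The first step is to bring the two balls appearing in the statement to a common center. Since $y\in B(x,R)$, the triangle inequality gives $B(x,R)\subset B(y,2R)$, so that $\mu(B(x,R))\le\mu(B(y,2R))$. Next, for $\rho$ with $0<\rho\le R$, I would choose the integer $k:=\lceil\log_2(2R/\rho)\rceil$. Because $2R/\rho>2$ we have $k\ge 1$, and by construction $2^k\rho\ge 2R$ as well as $k\le\log_2(2R/\rho)+1=\log_2(4R/\rho)$. From $2^k\rho\ge 2R$ it follows that $B(y,2R)\subset B(y,2^k\rho)$, and combining with the previous inclusion we get $\mu(B(x,R))\le\mu(B(y,2^k\rho))$.

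Putting these together, the iterated doubling inequality applied at $y$ with radius $\rho$ yields
$$\mu(B(y,\rho))\ \ge\ K_D^{-k}\,\mu(B(y,2^k\rho))\ \ge\ K_D^{-k}\,\mu(B(x,R)),$$
so it only remains to bound $K_D^{-k}$ from below in terms of $\rho/R$. Using $k\le\log_2(4R/\rho)$ and $K_D\ge 1$, one has $K_D^{-k}=2^{-k\log_2 K_D}\ge(4R/\rho)^{-\log_2 K_D}$, and therefore, setting $s:=\log_2 K_D$ and $K:=4^{-s}=K_D^{-2}$,
$$\frac{\mu(B(y,\rho))}{\mu(B(x,R))}\ \ge\ K_D^{-k}\ \ge\ K\Bigl(\frac{\rho}{R}\Bigr)^{s},$$
which is the asserted inequality; here $s>0$ whenever $K_D>1$, and in the degenerate case $K_D=1$ one may simply replace $s$ by an arbitrary positive number, since $\rho/R\le 1$ makes the right-hand side only smaller. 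I do not anticipate any real obstacle: the whole argument is one iteration of the doubling property, and the only care required is the bookkeeping of constants — picking $k$ large enough that $2^k\rho$ overshoots $2R$ while still controlling $k$ by $\log_2(4R/\rho)$ — together with the elementary inclusion $B(x,R)\subset B(y,2R)$, which is exactly what lets one transfer the estimate from balls centered at $x$ to balls centered at $y$.
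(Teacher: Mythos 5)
Your argument is correct: the reduction to a common center via $B(x,R)\subset B(y,2R)$, the choice $k=\lceil\log_2(2R/\rho)\rceil$, and the iterated doubling estimate give exactly the claimed bound with $s=\log_2 K_D$ and $K=K_D^{-2}$, and you handle the degenerate case $K_D=1$ properly. The paper does not prove this lemma but cites it from [BB, Lemma~3.3], and your proof is essentially the standard argument given there, so there is nothing to add.
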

We work under the same hypotheses set on ${\Omega}$ considered in \cite{MS}. 
Throughout the work, we make the following assumptions:
\begin{itemize}
	\item[$(H_1)$] There exists a constant $K\geq1$ such that for all $y \in {\Omega}$  and $0<\rho\leq {\rm diam}\,({\Omega})$, we have $$\mu(B(y, \rho) \cap {\Omega})\geq \dfrac{1}{K}\mu(B(y, \rho)).$$
	\item[$(H_2)$] (Ahlfors codimension 1 regularity of $P_{\Omega}$) For all $y \in \partial {\Omega}$ we have that $$\dfrac{1}{K\rho}\mu(B(y, \rho))\leq P_{\Omega}(B(y, \rho))\leq \dfrac{K}{\rho}\mu(B(y, \rho)),$$ where $K$ and $\rho$ are as in $(H_1)$.
	\item[$(H_3)$] $({\Omega}, d_{|{\Omega}}, \mu_{|{\Omega}})$ admits a $(1, p)$-Poincar\'{e} inequality with $\lambda=1$, where $p \in ]1, +\infty[$.
\end{itemize}

\begin{remark}
We point out some facts related to the above hypotheses, for reader convenience $($see \cite{MS}$)$. Indeed, we observe that $(H_1)$ and $(H_2)$ imply that $\mu(\partial {\Omega})=0$ and ${\Omega}$ is of finite perimeter. In addition, $(H_1)$-$(H_3)$ lead to the following Sobolev-type inequality for ${\Omega}$,
\begin{equation*}
\|u-u_{\Omega}\|_{L^p({\Omega})}\leq K \|g_{u}\|_{L^p({\Omega})},
\end{equation*} for some $K>0$.
In particular, if $u \in N^{1,p}_*({\Omega})$ then we get \begin{equation}\label{STI}
\|u\|_{L^p({\Omega})}\leq K \|g_{u}\|_{L^p({\Omega})}.
\end{equation}
\end{remark}

\begin{definition}[\cite{L}, Definition 4.1]
Let ${\Omega} \subset X$ be an open set and let $u$ be a $\mu$-measurable
function on ${\Omega}$. A function $Tu : \partial {\Omega} \to \mathbb{R}$ is the trace of $u$ if for $\mathcal{H}$-almost every $y \in \partial {\Omega}$ we have 
$$\lim_{\rho \to 0^+}\dfrac{1}{\mu({\Omega} \cap B(y,\rho))} \int_{{\Omega} \cap B(y,\rho)}|u - T u(y)| d\mu= 0.$$
\end{definition}
For the following existence theorem of the trace operator see \cite{MS} (further details can be found in \cite{LLW} and references therein).
\begin{theorem}[\cite{MS}, Proposition 3.8]
If $(H_1)$-$(H_3)$ hold true, there exists a bounded linear trace operator $$Tu: N^{1,p}({\Omega}) \to L^{\tilde p}(\partial {\Omega})$$ for every $\tilde p < \frac{p(s-1)}{s-p}$ if $p<s$ and $\tilde p < + \infty$ if $p\geq s$. This trace operator is given as follows. For $u \in N^{1,p}({\Omega})$, $\mathcal{H}$-almost every $y \in \partial {\Omega}$, there exists $Tu(y) \in \mathbb{R}$ such that $$\lim_{\rho \to 0^+} \frac{1}{\mu(B(y, \rho)\cap {\Omega})}\int_{B(y, \rho)\cap {\Omega}}|u-Tu(y)|d\mu=0.$$
Here $s$ is the lower exponent of \eqref{s}.
\end{theorem}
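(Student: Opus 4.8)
The plan is to define the trace pointwise as the limit of averages of $u$ over the sets $B(y,\rho)\cap\Omega$, to show that this limit exists for $\mathcal H$-almost every $y\in\partial\Omega$ and realizes the stated Lebesgue-point property, and finally to prove that the resulting map $u\mapsto Tu$ is linear and bounded into $L^{\tilde p}(\partial\Omega)$. For $y\in\partial\Omega$ and $\rho>0$ write $u_{y,\rho}:=u_{B(y,\rho)\cap\Omega}$. The backbone of the whole argument is the telescoping estimate
\[
|u_{y,2^{-j}\rho}-u_{y,2^{-(j+1)}\rho}|\le\frac{1}{\mu(B(y,2^{-(j+1)}\rho)\cap\Omega)}\int_{B(y,2^{-j}\rho)\cap\Omega}|u-u_{y,2^{-j}\rho}|\,d\mu,
\]
whose right-hand side, by $(H_1)$ (so that $\mu(B(y,2^{-(j+1)}\rho)\cap\Omega)$ is comparable to $\mu(B(y,2^{-j}\rho))$), the doubling property, and the $(1,p)$-Poincar\'e inequality $(H_3)$, is bounded by
\[
K\,2^{-j}\rho\left(\frac{1}{\mu(B(y,2^{-j}\rho)\cap\Omega)}\int_{B(y,2^{-j}\rho)\cap\Omega}g_u^p\,d\mu\right)^{1/p}.
\]

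First I would prove that $Tu(y):=\lim_{\rho\to0^+}u_{y,\rho}$ exists for $\mathcal H$-a.e. $y$. Using Lemma~\ref{lemm3.3} and $(H_1)$ to bound $\mu(B(y,2^{-j}\rho)\cap\Omega)$ below by a multiple of $(2^{-j}\rho)^{s}$, the $j$-th term above is of the order $2^{-j}\rho\,\big((2^{-j}\rho)^{-s}\int_{B(y,2^{-j}\rho)\cap\Omega}g_u^p\,d\mu\big)^{1/p}$, so the series $\sum_j|u_{y,2^{-j}\rho}-u_{y,2^{-(j+1)}\rho}|$ converges, and the limit is finite and independent of the base radius, at every $y$ for which $\lim_{r\to0^+}r^{-(s-1)}\int_{B(y,r)\cap\Omega}g_u^p\,d\mu=0$. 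Since $g_u^p\in L^1(\Omega)$, the codimension-1 regularity $(H_2)$ together with a Vitali covering argument (a weak-type $(1,1)$ bound for the operator $h\mapsto\sup_{r}P_\Omega(B(\cdot,r))^{-1}\int_{B(\cdot,r)\cap\Omega}h\,d\mu$ on $\partial\Omega$, plus approximation of $g_u^p$ by bounded functions) shows that the set of $y\in\partial\Omega$ at which this limit fails to be zero is $\mathcal H$-null; this is the step where $(H_2)$ is essential, as it converts an integrability statement for $g_u^p$ in $\Omega$ into an $\mathcal H$-negligible exceptional set on $\partial\Omega$. The function $Tu$ is then $\mathcal H$-measurable as an a.e.\ limit of the averages $y\mapsto u_{y,\rho}$, and the strong property $\lim_{\rho\to0^+}\big(\mu(B(y,\rho)\cap\Omega)\big)^{-1}\int_{B(y,\rho)\cap\Omega}|u-Tu(y)|\,d\mu=0$ follows by applying the Poincar\'e inequality $(H_3)$ at scale $\rho$ and adding $|u_{y,\rho}-Tu(y)|\to0$.

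Next I would establish the $L^{\tilde p}(\partial\Omega)$ bound; linearity of $u\mapsto Tu$ is immediate from the defining limit, and the bound gives boundedness. Fix $R_0:=2\,{\rm diam}\,(\Omega)$, so that $B(y,R_0)\cap\Omega=\Omega$, hence $u_{y,R_0}=u_{\Omega}$ for every $y\in\partial\Omega$. Summing the telescoping estimate from $j=0$ gives
\[
|Tu(y)|\le|u_{\Omega}|+K\sum_{j\ge0}2^{-j}R_0\left(\frac{1}{\mu(B(y,2^{-j}R_0)\cap\Omega)}\int_{B(y,2^{-j}R_0)\cap\Omega}g_u^p\,d\mu\right)^{1/p},
\]
and, using $(H_1)$ and Lemma~\ref{lemm3.3} for the lower measure bounds, the sum telescopes into a discrete Riesz-potential-type quantity built from $g_u$ on $\Omega$, controlled by a restricted fractional maximal function of $g_u$ at $y$ with the scaling forced by $(H_2)$. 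Integrating $|Tu|^{\tilde p}$ over $\partial\Omega$ against $P_{\Omega}$ (equivalently $\mathcal H$, by $(H_2)$), the constant term contributes $|u_{\Omega}|^{\tilde p}P_{\Omega}(\partial\Omega)<\infty$ with $|u_{\Omega}|\le K\|u\|_{L^p(\Omega)}$ by H\"older, while the potential/maximal term is handled by a Vitali covering of $\partial\Omega$, the doubling of $P_{\Omega}$ on $\partial\Omega$ (itself a consequence of $(H_2)$ and the doubling of $\mu$), a weak-type estimate for the relevant operator with respect to $P_{\Omega}$, and Marcinkiewicz interpolation, yielding $\|Tu\|_{L^{\tilde p}(\partial\Omega)}\le K\|u\|_{N^{1,p}(\Omega)}$ precisely for $\tilde p<\frac{p(s-1)}{s-p}$ when $p<s$ and for every finite $\tilde p$ when $p\ge s$.

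The main obstacle is this last global estimate: passing from the pointwise domination of $|Tu(y)|$ by a potential/maximal function of $g_u$ on $\Omega$ to an $L^{\tilde p}$ bound on $\partial\Omega$ requires carefully tracking the interplay between the codimension-1 factor $\rho^{-1}$ in $(H_2)$ and the lower exponent $s$ of Lemma~\ref{lemm3.3}, and recognizing $\frac{p(s-1)}{s-p}$ as the sharp boundary-Sobolev exponent in this situation. The cleanest route is to prove a weak-type bound for the restricted maximal operator with respect to $P_{\Omega}$ — the codimension-1 analogue of the classical trace embedding — and then interpolate with the trivial $L^{\infty}$ bound; pinning down the exact endpoint exponent and checking that the covering and doubling arguments survive the restriction from $\Omega$ to $\partial\Omega$ is where the real work lies.
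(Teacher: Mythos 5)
This statement is imported: the paper quotes it from \cite{MS} (Proposition 3.8) and gives no proof of its own, referring to \cite{LLW} for details, so there is no in-paper argument to measure you against. Your outline is, in substance, the argument of those references: define $Tu(y)$ as the limit of the averages over $B(y,\rho)\cap\Omega$, compare consecutive dyadic averages via the $(1,p)$-Poincar\'e inequality $(H_3)$ together with the measure-density condition $(H_1)$, dispose of the exceptional set on $\partial\Omega$ with a weak-type estimate for a codimension-one restricted maximal operator (the step where $(H_2)$ and the comparability $P_\Omega\approx\mathcal H|_{\partial\Omega}$ enter), and derive the $L^{\tilde p}$ bound from a pointwise Riesz-potential-type domination plus covering and interpolation. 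The approach is the right one.

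Three caveats. (i) The convergence criterion you isolate, $\lim_{r\to0^+}r^{-(s-1)}\int_{B(y,r)\cap\Omega}g_u^p\,d\mu=0$, is not what your own covering argument produces: the weak-type bound with respect to $\mathcal H$ controls $\sup_r \frac{r}{\mu(B(y,r))}\int_{B(y,r)\cap\Omega}g_u^p\,d\mu$, and since Lemma~\ref{lemm3.3} only gives $\mu(B(y,r))\gtrsim r^s$ one has $r^{1-s}\gtrsim r/\mu(B(y,r))$, so finiteness of the correctly normalized maximal function does not imply your stated condition. It does not need to: writing the $j$-th telescoping term as $r_j^{(p-1)/p}\bigl(\tfrac{r_j}{\mu(B_j)}\int_{B_j}g_u^p\,d\mu\bigr)^{1/p}$ shows that $\mathcal H$-a.e.\ finiteness of that maximal function already makes the series converge (using $p>1$); the criterion should be rephrased accordingly. (ii) $(H_3)$ is a Poincar\'e inequality on $(\Omega,d|_\Omega,\mu|_\Omega)$, i.e.\ for balls centered in $\Omega$; to apply it on $B(y,\rho)\cap\Omega$ with $y\in\partial\Omega$ you must pass to nearby interior centers (using $(H_1)$ and doubling), a routine but necessary step you skip. (iii) The quantitatively decisive step --- the weak-type estimate for the boundary fractional maximal operator that produces the exponent $\frac{p(s-1)}{s-p}$, and the interpolation that follows --- is named but not carried out, as you yourself acknowledge. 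As written, the proposal is a faithful roadmap of the proof in \cite{MS} rather than a self-contained proof.
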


We remark that, from the results in \cite{AMP} and \cite{L1}, if ${\Omega}$ supports a $(1, p)$-Poincar\'{e} inequality, then  $P_{\Omega} \approx \mathcal{H}_{|\partial {\Omega}}$, where $\mathcal{H}$ stands for the codimension one Hausdorff measure. For further details see \cite{A, KKST} and references therein.

Given a Neumann boundary value problem with boundary data $f\neq 0$ and reaction term $G$, we associate the following functional
\begin{equation}\label{J}
J(u)= \int_{\Omega} g_{u}^p d \mu -\int_{\Omega}G(u) d\mu+ \int_{\partial {\Omega}} Tu f dP_{\Omega} \quad \mbox{for all $u \in N^{1,p}(\Omega)$}.
\end{equation}

\begin{definition}
A function $u_0\in N^{1,p}_*({\Omega})$ is a $p$-harmonic solution to the Neumann boundary value problem with boundary data $f \neq 0$ and reaction term $G$ if 
\begin{align*}\label{min}
J(u_0)=& \int_{\Omega} g_{u_0}^p d \mu -\int_{\Omega}G(u_0) d\mu+ \int_{\partial {\Omega}} Tu_0 f dP_{\Omega} \nonumber \\
\leq&\int_{\Omega} g_{v}^p d \mu -\int_{\Omega}G(v) d\mu+ \int_{\partial {\Omega}} Tv f dP_{\Omega} = J(v)
\end{align*} 
for every $v \in N^{1,p}_*({\Omega})$, where $g_{u_0}$, $g_{v}$ are the minimal $p$-weak upper gradients of $u_0$ and $v$ in ${\Omega}$, respectively, and $Tu_0$ and $Tv$ are the traces of $u_0$ and $v$ on $\partial {\Omega}$, respectively.
\end{definition}

Throughout the paper, in considering the trace $Tu$ of $u$ we will omit $T$ and just write $u$.
The following theorem gives an embedding result.
\begin{theorem}[\cite{BB}, Theorem 5.50]
Assume that ${\Omega}$ supports a $(1, p)$-Poincaré
inequality, that \eqref{s} holds and that there exist $y_0 \in {\Omega}$ and a sequence $\{\rho_n\}_{n\in \mathbb{N}}$ such that $\lim_{n \to +\infty}\rho_n= +\infty$ and $\mu(B(y_0,\rho_n))\geq K\rho_n^s $ for all $n\in \mathbb{N}$.
\begin{itemize}
\item[(i)] If $s > p$, then $N^{1,p}({\Omega})$ continuously embeds into $L^{p*}({\Omega})$ with $p*= \frac{sp}{s-p}$.
\item[(ii)] If $s < p$ and ${\Omega}$ is complete, then $N^{1,p}({\Omega})$ continuously embeds into $C^{1-\frac{s}{p}}({\Omega})$.	
\end{itemize}
\end{theorem}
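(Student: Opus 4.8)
\emph{Proof plan.} The plan is to reduce both assertions to two ingredients: a uniform polynomial lower bound $\mu(B(x,\rho))\ge c\rho^{s}$ for the measure of balls, and a telescoping of the $(1,p)$-Poincar\'e inequality along dyadic annuli. For the mass bound I would fix $x\in\Omega$, $\rho>0$, pick $n$ with $\rho\le\rho_n$ and $x\in B(y_0,\rho_n)$, and apply Lemma \ref{lemm3.3} with $R=\rho_n$ to get
\[
\mu(B(x,\rho))\ge K\Big(\frac{\rho}{\rho_n}\Big)^{s}\mu(B(y_0,\rho_n))\ge K\Big(\frac{\rho}{\rho_n}\Big)^{s}K\rho_n^{s}=K^{2}\rho^{s};
\]
since $B(y_0,\rho_n)\subset B(x,\rho_n+d(x,y_0))$ and $\rho_n\to+\infty$, this also gives $\mu(B(x,R))\to+\infty$ as $R\to+\infty$ for every $x$. (Any dilation $\lambda$ in the Poincar\'e inequality is immaterial, as the chaining and covering arguments tolerate it.)

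\emph{Part (i), $s>p$.} Here I would first invoke the self-improvement of the Poincar\'e inequality (the truncation method of Haj\l asz and Koskela): doubling and the $(1,p)$-Poincar\'e inequality imply the Sobolev--Poincar\'e inequality
\[
\Big(\frac{1}{\mu(B)}\int_{B}|u-u_{B}|^{p^{*}}\,d\mu\Big)^{1/p^{*}}\le K\,r\Big(\frac{1}{\mu(\lambda B)}\int_{\lambda B}g_{u}^{p}\,d\mu\Big)^{1/p},\qquad B=B(x,r),\quad p^{*}=\frac{sp}{s-p}.
\]
Applying this on $B_{n}:=B(y_0,\rho_n)$ and using $\mu(B_{n})\ge c\rho_n^{s}$ together with $\frac{1}{p^{*}}-\frac1p=-\frac1s$, I would bound
\[
\|u-u_{B_{n}}\|_{L^{p^{*}}(B_{n})}\le K\,\rho_n\,\mu(B_{n})^{1/p^{*}-1/p}\|g_{u}\|_{L^{p}(\Omega)}\le K\|g_{u}\|_{L^{p}(\Omega)},
\]
while $\|u_{B_{n}}\|_{L^{p^{*}}(B_{n})}=|u_{B_{n}}|\mu(B_{n})^{1/p^{*}}\le\mu(B_{n})^{-1/s}\|u\|_{L^{p}(\Omega)}\to0$. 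Since $\bigcup_{n}B_{n}=\Omega$, monotone convergence then yields $\|u\|_{L^{p^{*}}(\Omega)}\le K\|g_{u}\|_{L^{p}(\Omega)}\le K\|u\|_{N^{1,p}(\Omega)}$, which is the embedding.

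\emph{Part (ii), $s<p$.} Now $\alpha:=1-\frac sp\in(0,1)$, and I would run a Morrey-type telescoping: for a Lebesgue point $x$ of $u$ and $R>0$, along $B_{i}:=B(x,2^{-i}R)$ the $(1,p)$-Poincar\'e inequality, doubling and $\mu(B_{i})\ge c(2^{-i}R)^{s}$ give
\[
|u_{B_{i}}-u_{B_{i+1}}|\le K\,\frac{1}{\mu(B_{i})}\int_{B_{i}}|u-u_{B_{i}}|\,d\mu\le K\,(2^{-i}R)\,\mu(B_{i})^{-1/p}\|g_{u}\|_{L^{p}(\Omega)}\le K(2^{-i}R)^{\alpha}\|g_{u}\|_{L^{p}(\Omega)};
\]
summing the convergent geometric series gives $|u(x)-u_{B(x,R)}|\le KR^{\alpha}\|g_{u}\|_{L^{p}(\Omega)}$, so $R=1$ yields $|u(x)|\le K\|u\|_{N^{1,p}(\Omega)}$. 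For Lebesgue points $x,y$ with $R=d(x,y)$, the inclusions $B(x,R)\subset B(y,2R)\subset B(x,3R)$ plus doubling and the Poincar\'e inequality bound $|u_{B(x,R)}-u_{B(y,2R)}|\le KR^{\alpha}\|g_{u}\|_{L^{p}(\Omega)}$, and combining estimates gives $|u(x)-u(y)|\le K\,d(x,y)^{\alpha}\|g_{u}\|_{L^{p}(\Omega)}$. Finally, completeness of $\Omega$ ensures that $\mu$-a.e.\ point is a Lebesgue point and that this full-measure set is dense, so the uniform H\"older estimate extends $u$ to a representative in $C^{\alpha}(\Omega)$ with $\|u\|_{C^{\alpha}(\Omega)}\le K\|u\|_{N^{1,p}(\Omega)}$.

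\emph{Expected main obstacle.} The hard part is in (i): naive telescoping only gives an $L^{q}$-embedding for some $q$ below the sharp exponent $p^{*}$, and reaching exactly $p^{*}=\frac{sp}{s-p}$ forces one through the self-improvement of the Poincar\'e inequality --- the truncation argument of Haj\l asz--Koskela, equivalently a fractional-integration estimate on the space of homogeneous type $\Omega$ for the Riesz-type potential $\int_{B}\frac{g_{u}(z)\,d(x,z)}{\mu(B(x,d(x,z)))}\,d\mu(z)$ that pointwise dominates $|u-u_{B}|$ for $p$-a.e.\ $x$. In (ii) the only subtlety is upgrading the pointwise H\"older bound on the Lebesgue set to a genuine continuous representative on all of $\Omega$, which is where completeness is used.
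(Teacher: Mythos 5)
This theorem is quoted in the paper verbatim from Bj\"orn--Bj\"orn \cite{BB}, Theorem 5.50; the paper gives no proof of it, so there is no in-paper argument to compare against. Your sketch is, in substance, the standard proof underlying the cited result and it is correct: the uniform lower mass bound $\mu(B(x,\rho))\ge c\rho^{s}$ extracted from Lemma \ref{lemm3.3} and the hypothesis on $\{\rho_n\}$; the Haj\l asz--Koskela self-improvement to the sharp $(p^{*},p)$-Sobolev--Poincar\'e inequality for (i); and the Morrey-type dyadic telescoping for (ii). The exponent arithmetic ($1/p^{*}-1/p=-1/s$, $\alpha=1-s/p$) checks out, and you rightly isolate the truncation/self-improvement step as the one genuinely nontrivial ingredient you are importing rather than proving, which is reasonable for a statement that is itself a textbook citation. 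Two small points: the $\mu$-a.e.\ existence of Lebesgue points is a consequence of doubling (Lebesgue differentiation via the basic covering theorem), not of completeness --- completeness enters only in passing from the dense Lebesgue set to a continuous representative on all of $\Omega$ and in the structural facts used in the source; and the hypotheses $\rho_n\to+\infty$ with $\mu(B(y_0,\rho_n))\ge K\rho_n^{s}$ force $\mu(\Omega)=+\infty$, which is precisely what legitimizes discarding the averages $u_{B_n}$ in (i), but also shows that the theorem as transcribed really concerns the ambient space $X$ rather than the bounded domain $\Omega$ used elsewhere in the paper.
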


Here, we assume that $G:{\Omega} \to \mathbb{R}$ is defined as follows.
\begin{equation}\label{G}
G(u)= c -|u|^{\gamma}  \mbox{ for all } u \in  N^{1,p}({\Omega}), 
\end{equation}
for some $c>0$ and  $1<\gamma<p^*=\frac{ps}{s-p}$ if $p<s$ and $1<\gamma< +\infty$ otherwise.

In the metric setting, we will look for a minimizer of $J$ in the Newtonian space $N^{1,p}_*({\Omega})$.
\begin{remark}\label{R0}
We note that, from the definition of $J$, $$\inf_{u \in N^{1,p}_*({\Omega})}J(u)\leq J(0)= -c \mu({\Omega})<0.$$
\end{remark}
\begin{lemma}\label{lemma3.3}
If $u\in N^{1,p}_*({\Omega})$ and $f\in L^q(\partial {\Omega})$, with $q=1$ if $p>s$ and 
%$q \in \left]\frac{p(s-1)}{s(p-1)}, +\infty\right]$ 
$\frac{p(s-1)}{s(p-1)}<q\le +\infty$ if $p\leq s$. Then there is a constant $K>0$ such that
$$J(u)\geq \|g_{u}\|_{L^p({\Omega})}\left(\|g_{u}\|^{p-1}_{L^p({\Omega})}-K \|f\|_{L^{q}(\partial {\Omega})}\right)-c \mu({\Omega}).$$
\end{lemma}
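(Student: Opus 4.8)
The plan is to estimate the three terms of $J(u)$ in \eqref{J} separately. The first term is exactly $\int_{\Omega} g_u^p\,d\mu = \|g_u\|_{L^p(\Omega)}^p$. For the reaction term, the explicit form \eqref{G} gives
$$-\int_{\Omega} G(u)\,d\mu = -c\,\mu(\Omega) + \int_{\Omega} |u|^{\gamma}\,d\mu \geq -c\,\mu(\Omega),$$
since $|u|^{\gamma}\geq 0$, so this term contributes at least $-c\,\mu(\Omega)$ and needs no further work. It thus remains to bound the boundary term from below, i.e. to bound $\bigl|\int_{\partial\Omega} Tu\, f\, dP_{\Omega}\bigr|$ from above by a constant multiple of $\|g_u\|_{L^p(\Omega)}\,\|f\|_{L^q(\partial\Omega)}$.

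For the boundary term I would first apply Hölder's inequality on $\partial\Omega$ with respect to $P_{\Omega}$: writing $q'$ for the conjugate exponent of $q$,
$$\Bigl|\int_{\partial\Omega} Tu\, f\, dP_{\Omega}\Bigr| \leq \|Tu\|_{L^{q'}(\partial\Omega)}\,\|f\|_{L^{q}(\partial\Omega)}.$$
The admissible range of $q$ is chosen precisely so that $q'$ is an exponent for which the trace operator is bounded: if $p\leq s$ and $q>\frac{p(s-1)}{s(p-1)}$, a short computation shows $q'<\frac{p(s-1)}{s-p}$ when $p<s$ and $q'<+\infty$ when $p=s$, so the trace theorem gives $\|Tu\|_{L^{q'}(\partial\Omega)}\leq K\|u\|_{N^{1,p}(\Omega)}$; while if $p>s$ then $q=1$ and $q'=+\infty$, and I would instead use the Morrey-type embedding of $N^{1,p}(\Omega)$ into the space of (Hölder) continuous functions, restricted to $\partial\Omega$, to obtain $\|Tu\|_{L^{\infty}(\partial\Omega)}\leq K\|u\|_{N^{1,p}(\Omega)}$. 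In all cases, $\|Tu\|_{L^{q'}(\partial\Omega)}\leq K\|u\|_{N^{1,p}(\Omega)}$.

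Since $u\in N^{1,p}_*(\Omega)$, the Sobolev-type inequality \eqref{STI} gives $\|u\|_{L^p(\Omega)}\leq K\|g_u\|_{L^p(\Omega)}$, hence $\|u\|_{N^{1,p}(\Omega)}=\|g_u\|_{L^p(\Omega)}+\|u\|_{L^p(\Omega)}\leq K\|g_u\|_{L^p(\Omega)}$. Plugging this into the previous estimate gives $\bigl|\int_{\partial\Omega} Tu\, f\, dP_{\Omega}\bigr|\leq K\|g_u\|_{L^p(\Omega)}\|f\|_{L^q(\partial\Omega)}$, and combining the three bounds yields
$$J(u)\geq \|g_u\|_{L^p(\Omega)}^p - K\|g_u\|_{L^p(\Omega)}\|f\|_{L^q(\partial\Omega)} - c\,\mu(\Omega) = \|g_u\|_{L^p(\Omega)}\bigl(\|g_u\|_{L^p(\Omega)}^{p-1}-K\|f\|_{L^q(\partial\Omega)}\bigr)-c\,\mu(\Omega),$$
which is the claim. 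The only step that requires real care is the exponent bookkeeping in the middle paragraph: one must check that the conjugate exponent of the admissible range of $q$ lands inside the range of validity of the trace theorem, and that the supercritical case $p>s$, where this would fail, is instead covered by the embedding into continuous functions.
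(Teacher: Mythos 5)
Your proof is correct and follows the same route as the paper: drop the nonnegative term $\int_\Omega |u|^\gamma\,d\mu$, apply H\"older's inequality to the boundary integral, and then use the bound $\|Tu\|_{L^{q'}(\partial\Omega)}\leq K\|g_u\|_{L^p(\Omega)}$ for $u\in N^{1,p}_*(\Omega)$. The only difference is that you verify explicitly that the conjugate exponent $q'$ falls within the range of the trace theorem (and handle $p>s$ via the embedding into continuous functions), whereas the paper delegates this entire step to a citation of Mal\'y's trace results; your exponent check is accurate.
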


\begin{proof} By H\"{o}lder inequality
\begin{align*}
	J(u)&= \int_{\Omega}g_{u}^{p}d\mu-\int_{\Omega}(c -|u|^{\gamma}) d\mu+\int_{\partial {\Omega}}uf dP_{\Omega}\nonumber \\& \geq \int_{\Omega}g_{u}^{p}d\mu-c \mu({\Omega})-\int_{\partial {\Omega}}|uf| dP_{\Omega}\nonumber \\&\geq \|g_{u}\|^{p}_{L^p({\Omega})}-c \mu({\Omega})-\|u\|_{L^{q'}(\partial {\Omega})} \|f\|_{L^{q}(\partial {\Omega})}.
\end{align*}
It is known that $\|u\|_{L^{q'}(\partial {\Omega})}\leq K\|g_{u}\|_{L^p({\Omega})}$  (since $u \in N^{1,p}_*({\Omega})$, see \cite{M}), so we conclude that
\begin{equation}\label{2.6}
J(u)\geq \|g_{u}\|_{L^p({\Omega})}\left(\|g_{u}\|^{p-1}_{L^p({\Omega})}-K \|f\|_{L^{q}(\partial {\Omega})}\right)-c \mu({\Omega}).
\end{equation}
\end{proof}

\begin{corollary}\label{P32}
For every $u \in N^{1,p}_*({\Omega})$, there is a constant $K>0$ such that $$J(u)\geq -K\|f\|^{p'}_{L^q(\partial {\Omega})}-c \mu({\Omega}).$$ 
\end{corollary}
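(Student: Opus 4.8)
The plan is to start from inequality \eqref{2.6} of Lemma \ref{lemma3.3}, which gives
$$J(u)\geq \|g_{u}\|_{L^p({\Omega})}\left(\|g_{u}\|^{p-1}_{L^p({\Omega})}-K \|f\|_{L^{q}(\partial {\Omega})}\right)-c \mu({\Omega}),$$
and to minimize the right-hand side over the single nonnegative real parameter $t=\|g_u\|_{L^p(\Omega)}$. That is, I would set $\Phi(t)=t^p-K\|f\|_{L^q(\partial\Omega)}\,t$ for $t\geq 0$, so that $J(u)\geq \Phi(\|g_u\|_{L^p(\Omega)})-c\mu(\Omega)\geq \inf_{t\geq 0}\Phi(t)-c\mu(\Omega)$. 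Then it suffices to bound $\inf_{t\geq 0}\Phi(t)$ from below.

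The function $\Phi$ is smooth on $(0,\infty)$ with $\Phi'(t)=p t^{p-1}-K\|f\|_{L^q(\partial\Omega)}$, which vanishes at $t_0=\bigl(K\|f\|_{L^q(\partial\Omega)}/p\bigr)^{1/(p-1)}$; since $p>1$ this is the unique critical point and it is a minimum (as $\Phi(t)\to+\infty$ when $t\to+\infty$ and $\Phi(0)=0$). Plugging $t_0$ back in, $\Phi(t_0)=t_0^{p-1}\bigl(t_0-K\|f\|_{L^q(\partial\Omega)}\bigr)=-\,(1-\tfrac1p)\,K\|f\|_{L^q(\partial\Omega)}\,t_0$, and substituting the value of $t_0$ yields
$$\Phi(t_0)=-\Bigl(1-\tfrac1p\Bigr)\,p^{-1/(p-1)}\,K^{p/(p-1)}\,\|f\|_{L^q(\partial\Omega)}^{p/(p-1)}=-\,\widetilde K\,\|f\|_{L^q(\partial\Omega)}^{p'},$$
using $p/(p-1)=p'$ and absorbing all the dimensional constants into a new constant $\widetilde K>0$ (which is legitimate under the paper's convention that constants denoted $K$ may change from line to line). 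Combining, $J(u)\geq -\widetilde K\|f\|_{L^q(\partial\Omega)}^{p'}-c\mu(\Omega)$ for every $u\in N^{1,p}_*(\Omega)$, which is the claim.

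This argument is essentially routine: the only thing to be careful about is that the bound from Lemma \ref{lemma3.3} applies for all $u\in N^{1,p}_*(\Omega)$ with the same constant $K$, so that the elementary one-variable minimization gives a uniform lower bound; there is no real obstacle. One should also note the case $\|f\|_{L^q(\partial\Omega)}=0$, where $\Phi(t)=t^p\geq 0$ and the inequality $J(u)\geq -c\mu(\Omega)$ holds trivially, consistent with the stated estimate.
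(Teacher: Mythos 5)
Your proposal is correct and is exactly the paper's argument: the paper's proof consists of the single sentence ``consider the minimum of $t\mapsto t^p-Kt\|f\|_{L^q(\partial\Omega)}$ on $[0,+\infty[$,'' and you have simply carried out that minimization explicitly, correctly obtaining the minimum value $-\widetilde K\|f\|_{L^q(\partial\Omega)}^{p'}$ with $p'=p/(p-1)$.
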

\begin{proof}
It is sufficient to consider the minimum of the following function $$[0, +\infty[\ni t \mapsto t^p-Kt\|f\|_{L^{q}(\partial {\Omega})}.$$
\end{proof}

\section{Existence of a solution}\label{Sec3}
The existence of a nontrivial solution to the Neumann boundary value problem with non zero boundary data $f$ and reaction term $G$ is an immediate consequence of the following theorem which shows that $J$ has a minimizer.
\begin{theorem}
Let $J$ defined as in \eqref{J}. Then $J$ has a minimizer in $N^{1,p}_*({\Omega})$.
\end{theorem}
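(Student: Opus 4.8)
The plan is to use the direct method of the calculus of variations. First I would show that $J$ is bounded below on $N^{1,p}_*(\Omega)$: this is exactly Corollary \ref{P32}, which gives $J(u) \geq -K\|f\|_{L^q(\partial\Omega)}^{p'} - c\mu(\Omega)$, so $m := \inf_{u \in N^{1,p}_*(\Omega)} J(u)$ is a finite real number (and by Remark \ref{R0} it is negative, so in particular minimizers will be nontrivial). Next I would take a minimizing sequence $\{u_n\} \subset N^{1,p}_*(\Omega)$ with $J(u_n) \to m$. Using Lemma \ref{lemma3.3}, $J(u_n) \geq \|g_{u_n}\|_{L^p(\Omega)}\big(\|g_{u_n}\|_{L^p(\Omega)}^{p-1} - K\|f\|_{L^q(\partial\Omega)}\big) - c\mu(\Omega)$; since $J(u_n)$ is bounded, this coercivity-type estimate forces $\|g_{u_n}\|_{L^p(\Omega)}$ to be bounded (the right-hand side would otherwise blow up as the gradient norm grows). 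Combining with the Sobolev-type inequality \eqref{STI}, which applies because $u_n \in N^{1,p}_*(\Omega)$, we also get $\|u_n\|_{L^p(\Omega)}$ bounded, hence $\{u_n\}$ is bounded in $N^{1,p}(\Omega)$.

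The next step is to extract a limit. Since $\Omega$ is complete, bounded, carries a doubling measure and a Poincaré inequality, $N^{1,p}(\Omega)$ embeds into $L^{p^*}(\Omega)$ (Theorem of \cite{BB} 5.50 (i) when $s > p$, and into a Hölder space when $s < p$), and more to the point $N^{1,p}(\Omega)$ is reflexive for $p \in (1,\infty)$ under these hypotheses, so a subsequence of $\{u_n\}$ converges weakly in $N^{1,p}(\Omega)$ to some $u_0$; passing to a further subsequence we may assume $u_n \to u_0$ strongly in $L^p(\Omega)$ and pointwise $\mu$-a.e. (via the compact embedding $N^{1,p}(\Omega) \hookrightarrow\hookrightarrow L^p(\Omega)$ on a bounded domain), and $Tu_n \to Tu_0$ in $L^{q'}(\partial\Omega)$ using the compactness of the trace operator of \cite{MS} Proposition 3.8 onto $L^{\tilde p}(\partial\Omega)$ for $\tilde p > q'$. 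The constraint $\int_\Omega u_n\, d\mu = 0$ passes to the limit, so $u_0 \in N^{1,p}_*(\Omega)$.

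Finally I would check lower semicontinuity of $J$ along this subsequence. The three terms are handled separately: (i) $u \mapsto \int_\Omega g_u^p\, d\mu$ is sequentially weakly lower semicontinuous on $N^{1,p}(\Omega)$ — this is a standard fact for the minimal $p$-weak upper gradient under weak convergence, e.g. using that $\|g_{\cdot}\|_{L^p}$ is part of a norm and convexity/Mazur, so $\int_\Omega g_{u_0}^p\, d\mu \leq \liminf_n \int_\Omega g_{u_n}^p\, d\mu$; (ii) $\int_\Omega G(u_n)\, d\mu = c\mu(\Omega) - \int_\Omega |u_n|^\gamma\, d\mu \to c\mu(\Omega) - \int_\Omega |u_0|^\gamma\, d\mu$ by strong $L^p$ convergence together with the subcritical growth $\gamma < p^*$ (which gives uniform integrability of $|u_n|^\gamma$, so this term is actually continuous); (iii) $\int_{\partial\Omega} Tu_n\, f\, dP_\Omega \to \int_{\partial\Omega} Tu_0\, f\, dP_\Omega$ by strong convergence of the traces in $L^{q'}(\partial\Omega)$ paired with $f \in L^q(\partial\Omega)$. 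Putting these together, $J(u_0) \leq \liminf_n J(u_n) = m$, and since $u_0 \in N^{1,p}_*(\Omega)$ the reverse inequality $J(u_0) \geq m$ holds by definition of the infimum, so $J(u_0) = m$ and $u_0$ is the desired minimizer. The main obstacle I expect is step (iii) above, i.e. verifying that the trace term passes to the limit: this requires the compactness (not just boundedness) of the trace operator $N^{1,p}(\Omega) \to L^{q'}(\partial\Omega)$, which in turn rests on the precise relation between $q'$ and the exponent $\tilde p$ in \cite{MS}, Proposition 3.8, and on the codimension-$1$ Ahlfors regularity $(H_2)$ of $P_\Omega$; the weak lower semicontinuity in (i) is also delicate and is where the structure of $g_u$ as the \emph{minimal} $p$-weak upper gradient is essential.
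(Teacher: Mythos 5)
Your outline is the direct method and arrives at the right conclusion, but it takes a genuinely different route from the paper's proof. After the common first step (boundedness below via Corollary \ref{P32}, a minimizing sequence with $\|g_{u_n}\|_{L^p(\Omega)}$ bounded by Lemma \ref{lemma3.3} and then $\|u_n\|_{L^p(\Omega)}$ bounded by \eqref{STI}), you pass to a weak limit in $N^{1,p}(\Omega)$, invoking reflexivity of the Newtonian space, a compact embedding into $L^p(\Omega)$, and compactness of the trace operator, and then check lower semicontinuity term by term. The paper instead only uses reflexivity of $L^p(\Omega)$: it extracts subsequences with $u_n\rightharpoonup u$ and $g_{u_n}\rightharpoonup g$ weakly in $L^p(\Omega)$, applies Mazur's lemma to obtain convex combinations $\widehat u_n$ converging strongly, and then exploits that $J$ is \emph{convex} (note $-\int_\Omega G(u)\,d\mu=\int_\Omega|u|^\gamma d\mu-c\mu(\Omega)$ is convex because of the sign in \eqref{G}, and the trace term is linear), so that $J(\widehat u_n)\le\sum_i\alpha_{n,i}J(u_i)\to J_0$ and lower semicontinuity along the \emph{strongly} convergent Mazur sequence finishes the proof. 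The payoff of the paper's route is that strong convergence of $\widehat u_n$ in $N^{1,p}$-norm makes the boundary term converge using only the \emph{boundedness} of the trace operator from Proposition 3.8 of \cite{MS}, and no reflexivity of $N^{1,p}(\Omega)$ or Rellich-type compactness is needed. The payoff of your route, had it been completed, is that it does not use convexity of $J$ at all and would therefore survive a reaction term of the opposite sign.

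The one genuine gap in your version is the point you yourself flag: the convergence of $\int_{\partial\Omega}Tu_n\,f\,dP_\Omega$ requires compactness (not merely boundedness) of the trace operator into $L^{q'}(\partial\Omega)$, and neither the paper nor the quoted Proposition 3.8 of \cite{MS} provides this; you would have to prove it from $(H_1)$--$(H_3)$ and the strict inequality $q'<\frac{p(s-1)}{s-p}$, which is a nontrivial additional piece of work. Similarly, reflexivity of $N^{1,p}(\Omega)$ under doubling and Poincar\'e is true (it goes back to Cheeger \cite{C}) but is a deep fact the paper deliberately avoids. If you replace your weak-convergence step by the Mazur/convexity argument, both of these unproved ingredients become unnecessary and your proof collapses onto the paper's.
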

\begin{proof}
Let $J_0=\inf_{u\in N^{1,p}_*({\Omega})} J(u)$. If $J_0=J(0)$, then $0$ is a minimizer. So, we assume that $J_0<J(0)$. We consider a sequence $\{u_n\}\subset N^{1,p}_*({\Omega})$ such that $$J_0=\lim\limits_{n\to +\infty}J(u_n) \quad \mbox{and} \quad J(u_n)\leq -c \mu({\Omega}),$$ for all $n \in \mathbb{N}$ (see Remark \ref{R0}).
For all $n \in \mathbb{N}$, we consider the $p$-weak minimal upper gradients of $u_n$, say $g_{u_n}$. By \eqref{2.6}, we deduce that $$-c \mu({\Omega})\geq J(u_n)\geq \|g_{u_n}\|_{L^p({\Omega})}(\|g_{u_n}\|^{p-1}_{L^p({\Omega})}-K \|f\|_{L^{q}(\partial {\Omega})})-c \mu({\Omega}),$$ that is $$\|g_{u_n}\|^{p-1}_{L^p({\Omega})}\leq K \|f\|_{L^{q}(\partial {\Omega})}.$$ This ensures that the sequence of minimal $p$-weak upper gradients $\{g_{u_n}\}$ is bounded in $L^p({\Omega})$. 
We recall that the Sobolev-type inequality \eqref{STI} holds.
As a consequence we can also deduce that $\{u_n\}$ is bounded. Since $L^p({\Omega})$ is a reflexive space, there are two subsequences, still denoted with $\{g_{u_n}\}$ and $\{u_n\}$, weakly convergent to some elements $g_{u}$ and $u$, respectively.
Using Mazur's Lemma, we can find two convex combinations $\widehat{u}_n= \sum_{i=n}^{N(n)} \alpha_{n,i}u_i$ and $g_{\widehat{u}_n}= \sum_{i=n}^{N(n)} \alpha_{n,i}g_{u_i}$ with $n=1,2,...$ such that $\{\widehat{u}_n\}$ converges to $\widehat{u}$ and $\{g_{\widehat{u}_n}\}$ converges to $g_{\widehat{u}}$ as $n \to +\infty$. Proceeding as in the proof of Theorem 4.3 of \cite{MS}, we have that $\widehat{u} \in N^{1,p}_*({\Omega})$ and $$\int_{\Omega} g_{\widehat{u}}^p \, d\mu \leq \liminf _{n\to+\infty}\int_{\Omega} g_{\widehat{u}_n}^p d\mu,$$ where $g_{\widehat{u}}$ and $g_{\widehat{u}_n}$ are the minimal $p$-weak upper gradients of $\widehat{u}$ and $\widehat{u}_n$, respectively.
Since the trace operator is linear and since functions $$\widehat{u} \mapsto \int_{\Omega} |\widehat{u}|^{\gamma} d\mu \quad \mbox{and} \quad \widehat{u} \mapsto \int_{\Omega} g_{\widehat{u}}^p d\mu$$ are convex, we conclude that $J$ is convex. Therefore,
\begin{align*}
J_0 &\leq J(\widehat{u}_n)=J\Big(\sum_{i=n}^{N(n)} \alpha_{n,i}u_i\Big)\leq \sum_{i=n}^{N(n)} \alpha_{n,i}J(u_i)\to J_0  \quad \mbox{as } n\to +\infty.\nonumber \\&
\end{align*}
Thus,
\begin{align*}
J_0 &\leq J(\widehat{u})=\int_{\Omega} g_{\widehat{u}}^p \, d\mu-\int_{\Omega} (c-|\widehat{u}|^{\gamma}) d\mu+\int_{\partial {\Omega}}\widehat{u} f dP_{\Omega}\nonumber \\&\leq \liminf _{n\to +\infty}\left(\int_{\Omega} g_{\widehat{u}_n}^p \, d\mu-\int_{\Omega} (c-|\widehat{u}_n|^{\gamma}) d\mu+\int_{\partial {\Omega}}\widehat{u}_n	 f dP_{\Omega}\right)\nonumber \\&=\liminf _{n\to+\infty}J(\widehat{u}_n)=J_0.
\end{align*}
So $J$ has $\widehat{u}$ as minimizer in $N^{1,p}_*({\Omega})$. 
\end{proof}

\begin{proposition}
	Let  $\mathcal{M}= \{ u \in N^{1,p}_*({\Omega}): J(u)=J_0\}$ be the set of minimizers of $J$. Then $\mathcal{M}$ is norm-closed and convex.
\end{proposition}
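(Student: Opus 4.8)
The plan is to prove the two closure properties of $\mathcal{M}$ separately, both using the convexity of $J$ that was already established in the proof of the existence theorem (the map $u \mapsto \int_\Omega g_u^p\,d\mu$ is convex, $u \mapsto \int_\Omega |u|^\gamma\,d\mu$ is convex, and $u \mapsto \int_{\partial\Omega} uf\,dP_\Omega$ is linear, hence $J$ is convex on the convex set $N^{1,p}_*(\Omega)$).

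First I would show $\mathcal{M}$ is convex. Take $u_1, u_2 \in \mathcal{M}$ and $t \in [0,1]$. Since $N^{1,p}_*(\Omega)$ is a vector space, $tu_1 + (1-t)u_2 \in N^{1,p}_*(\Omega)$, so $J(tu_1 + (1-t)u_2) \geq J_0$ by definition of $J_0$. On the other hand, convexity of $J$ gives $J(tu_1 + (1-t)u_2) \leq tJ(u_1) + (1-t)J(u_2) = tJ_0 + (1-t)J_0 = J_0$. Hence equality holds and $tu_1 + (1-t)u_2 \in \mathcal{M}$.

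Next I would show $\mathcal{M}$ is norm-closed in $N^{1,p}(\Omega)$. Let $\{u_k\} \subset \mathcal{M}$ with $u_k \to u$ in the $N^{1,p}(\Omega)$-norm; I must show $u \in \mathcal{M}$. The constraint set is closed: $\int_\Omega u_k\,d\mu = 0$ and norm convergence implies $L^p(\Omega)$-convergence, hence (since $\Omega$ has finite measure, by $(H_1)$ and boundedness of $\Omega$) $\int_\Omega u\,d\mu = 0$, so $u \in N^{1,p}_*(\Omega)$ and therefore $J(u) \geq J_0$. For the reverse inequality I would check that $J$ is continuous, or at least lower semicontinuous in the appropriate direction, along this sequence: norm convergence forces $\|g_{u_k} - g_u\|_{L^p(\Omega)} \to 0$ (since $g_{u_k - u}$ controls $|g_{u_k} - g_u|$ $\mu$-a.e. and $\|g_{u_k-u}\|_{L^p(\Omega)} \leq \|u_k - u\|_{N^{1,p}(\Omega)} \to 0$), hence $\int_\Omega g_{u_k}^p\,d\mu \to \int_\Omega g_u^p\,d\mu$; the term $\int_\Omega |u_k|^\gamma\,d\mu \to \int_\Omega |u|^\gamma\,d\mu$ by the continuous embedding $N^{1,p}(\Omega) \hookrightarrow L^\gamma(\Omega)$ (valid since $\gamma < p^*$ when $p < s$, and for all $\gamma$ otherwise) together with boundedness of $\Omega$; and the boundary term converges because the trace operator $T: N^{1,p}(\Omega) \to L^{\tilde p}(\partial\Omega)$ is bounded and linear with $f \in L^q(\partial\Omega)$ for a suitable conjugate exponent, so $\int_{\partial\Omega} u_k f\,dP_\Omega \to \int_{\partial\Omega} uf\,dP_\Omega$ by Hölder. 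Thus $J(u_k) \to J(u)$, and since $J(u_k) = J_0$ for all $k$, we get $J(u) = J_0$, i.e. $u \in \mathcal{M}$.

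The main obstacle is the continuity of the three terms of $J$ under norm convergence, and in particular making sure the exponents line up: one must invoke the boundedness of the trace operator from the trace theorem quoted above and pick $q$ as in Lemma~\ref{lemma3.3} so that Hölder applies on $\partial\Omega$, and one must invoke the Sobolev-type embedding to handle the reaction term $\int_\Omega |u|^\gamma\,d\mu$. Once continuity of $J$ is in hand, both closure statements are immediate; alternatively, norm-closedness also follows purely formally from convexity and lower semicontinuity of $J$ combined with the trivial bound $J \geq J_0$ on the constraint set, so if one only has lower semicontinuity of the gradient term (Fatou via Mazur, as in the existence proof) that already suffices, since $J(u) \leq \liminf J(u_k) = J_0$ and $J(u) \geq J_0$.
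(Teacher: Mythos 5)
Your proposal is correct and follows essentially the same route as the paper: convexity of $\mathcal{M}$ from convexity of $J$, and norm-closedness from the behaviour of $J$ along norm-convergent sequences (the paper simply invokes sequential lower semicontinuity of $J$, which is exactly the shortcut you note in your final paragraph). Your additional details --- closedness of the constraint set $N^{1,p}_*(\Omega)$ and the term-by-term continuity of $J$ under norm convergence --- are sound and in fact sharper than what the paper records.
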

\begin{proof}
	We consider $t \in (0, 1)$ and $u_1, u_2 \in \mathcal{M}$.
	Since, as we have already pointed out, $J$ is a convex functional, we have that 
	\begin{align*}
	J(tu_1+(1-t)u_2)&\leq tJ(u_1) + (1-t)J(u_2)\\&=tJ_0 + (1-t)J_0=J_0.
	\end{align*} 
That means that $tu_1+(1-t)u_2\in \mathcal{M}$ and so, $\mathcal{M}$ is convex.
	From the sequential lower semi-continuity of $J$ we have also that $\mathcal{M}$ is norm-closed.
\end{proof}

\begin{proposition}
	Let $u_1,u_2 \in \mathcal{M}$. Then
	\begin{itemize}
		\item[(i)] $-\int_{{\Omega}}G(u_1) d\mu+ \int_{\partial {\Omega}} u_1 f dP_{\Omega} = -\int_{\Omega}G(u_2) d\mu+ \int_{\partial {\Omega}} u_2 f dP_{\Omega}$;
		\item[(ii)] $g_{u_1}=g_{u_2}$ a.e. in ${\Omega}$.
	\end{itemize}  
\end{proposition}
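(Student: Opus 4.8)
The plan is to refine the convexity argument already used for $J$ by isolating its genuinely strictly convex part, namely the $p$-energy $A(u):=\int_{\Omega}g_u^p\,d\mu$, and to exploit that $\mathcal{M}$ is convex (as just proved). Write $J(u)=A(u)+\Phi(u)$ with
\[
\Phi(u):=-\int_{\Omega}G(u)\,d\mu+\int_{\partial\Omega}uf\,dP_{\Omega}=\int_{\Omega}|u|^{\gamma}\,d\mu-c\,\mu(\Omega)+\int_{\partial\Omega}uf\,dP_{\Omega};
\]
since $s\mapsto|s|^{\gamma}$ is convex ($\gamma>1$) and the trace term is linear, $\Phi$ is convex. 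Fix $t\in(0,1)$ and set $v:=tu_1+(1-t)u_2$. By the convexity of $\mathcal{M}$, $v\in\mathcal{M}$, hence $J(v)=J_0$.

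First I would bound $A(v)$. Since $tg_{u_1}+(1-t)g_{u_2}$ is a $p$-weak upper gradient of $v$, minimality gives $g_v\le tg_{u_1}+(1-t)g_{u_2}$ $\mu$-a.e.\ in $\Omega$, and by convexity of $s\mapsto s^p$,
\[
A(v)\le\int_{\Omega}\bigl(tg_{u_1}+(1-t)g_{u_2}\bigr)^p d\mu\le t\,A(u_1)+(1-t)\,A(u_2).
\]
Adding the inequality $\Phi(v)\le t\,\Phi(u_1)+(1-t)\,\Phi(u_2)$ yields $J_0=J(v)\le t\,J(u_1)+(1-t)\,J(u_2)=J_0$, so equality holds throughout. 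In particular $\int_{\Omega}\bigl(tg_{u_1}+(1-t)g_{u_2}\bigr)^p d\mu=t\,A(u_1)+(1-t)\,A(u_2)$, i.e.\ the nonnegative function $t\,g_{u_1}^p+(1-t)\,g_{u_2}^p-\bigl(tg_{u_1}+(1-t)g_{u_2}\bigr)^p$ has vanishing integral and hence is zero $\mu$-a.e. Since $s\mapsto s^p$ is \emph{strictly} convex on $[0,+\infty[$ for $p>1$, this forces $g_{u_1}=g_{u_2}$ $\mu$-a.e.\ in $\Omega$, which is (ii).

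For (i), observe that (ii) already gives $A(u_1)=A(u_2)$. Since $u_1,u_2\in\mathcal{M}$ we have $A(u_1)+\Phi(u_1)=J_0=A(u_2)+\Phi(u_2)$, and cancelling $A(u_1)=A(u_2)$ leaves $\Phi(u_1)=\Phi(u_2)$, which is precisely the asserted identity $-\int_{\Omega}G(u_1)\,d\mu+\int_{\partial\Omega}u_1f\,dP_{\Omega}=-\int_{\Omega}G(u_2)\,d\mu+\int_{\partial\Omega}u_2f\,dP_{\Omega}$.

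The only delicate point is the passage from equality of integrals to the pointwise conclusion $g_{u_1}=g_{u_2}$: passing to equality of integrands is automatic, since the difference is a nonnegative integrable function with zero integral, and the pointwise step is exactly strict convexity of $s\mapsto s^p$, i.e.\ that equality in $t\,a^p+(1-t)\,b^p\ge(ta+(1-t)b)^p$ with $t\in(0,1)$ forces $a=b$. The remaining ingredients — the sublinearity $g_{tu_1+(1-t)u_2}\le tg_{u_1}+(1-t)g_{u_2}$ of minimal $p$-weak upper gradients and the convexity of $\Phi$ — are routine and are already invoked earlier in the paper.
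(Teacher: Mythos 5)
Your proof is correct, and it reaches the conclusion by a genuinely different mechanism than the paper, even though both ultimately rest on the convexity of $s\mapsto s^p$ in the gradient variable. The paper argues by contradiction: assuming $\mu(\{g_{u_1}\neq g_{u_2}\})>0$ it picks $\delta>0$ with $D_\delta=\{|g_{u_1}-g_{u_2}|>\delta\}$ of positive measure, invokes a \emph{uniform} convexity estimate $\bigl(\tfrac{a+b}{2}\bigr)^p\le\tfrac{a^p+b^p}{2}-\epsilon$ with an explicit $\epsilon=\epsilon(\delta,p)$ valid wherever $|a-b|\ge\delta$, and concludes $J\bigl(\tfrac{u_1+u_2}{2}\bigr)\le J(u_1)-\epsilon\,\mu(D_\delta)$, contradicting minimality. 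You instead run the chain of inequalities $J_0\le J(v)\le tJ(u_1)+(1-t)J(u_2)=J_0$ forwards, force equality in each link, and extract $g_{u_1}=g_{u_2}$ from the fact that a nonnegative integrand with zero integral vanishes a.e.\ together with \emph{strict} (not uniform) convexity of $s\mapsto s^p$. Your route buys robustness: the paper's fixed modulus $\epsilon$ depending only on $\delta$ is delicate on the unbounded interval $[0,+\infty[$ when $1<p<2$ (the convexity gap $\tfrac{a^p+b^p}{2}-\bigl(\tfrac{a+b}{2}\bigr)^p$ can tend to $0$ as $a,b\to+\infty$ with $a-b$ fixed), whereas your qualitative argument needs no such bound and works uniformly for all $p>1$. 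What you give up is the quantitative flavour of the paper's estimate, which in principle measures how far a near-minimizer's gradient can deviate. Both proofs deduce (i) from (ii) in the same way, by cancelling the equal $p$-energies in $J(u_1)=J(u_2)$; and note that you do not even need the convexity of $\mathcal{M}$ from the preceding proposition, since $v\in N^{1,p}_*(\Omega)$ already gives $J(v)\ge J_0$.
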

\begin{proof}
Firstly we note that, from $J(u_1)=J(u_2)$ and $(ii)$, we deduce that $(i)$ holds. So, in order to conclude the proof, we just need to prove that $(ii)$ is satisfied. By absurd, let $\mu(\{x \in {\Omega} :g_{u_1}\neq g_{u_2}\}) >0$. Then we can choose $\delta>0$ such that $D_{\delta}= \{x \in {\Omega} :|g_{u_1}-g_{u_2}|>\delta\}$ has positive measure. 
We consider $$u=\dfrac{u_1+u_2}{2}.$$ From the definition of minimal $p$-weak upper gradient, we have that $$g_{u}\leq \dfrac{g_{u_1}+g_{u_2}}{2}.$$ The function $t\mapsto t^p$	is uniformly convex on $[0, +\infty[$. Thus there exists $\epsilon= \delta^pp (2^{-1}-2^{-p})$ such that $$\left(\frac{g_{u_1}+g_{u_2}}{2}\right)^p\leq \frac{g_{u_1}^p+g_{u_2}^p}{2}-\epsilon,$$
where $|g_{u_1}-g_{u_2}|\geq\delta$.
As a consequence, we get that 
\begin{align*}
	J(u)=& \int_{\Omega} g_{u}^p d\mu-\int_{\Omega} (c-|u|^{\gamma}) d\mu+\int_{\partial {\Omega}}u f dP_{\Omega}\nonumber \\
 \leq&\int_{D_{\delta}}\left(\frac{g_{u_1}^p+g_{u_2}^p}{2}- \epsilon\right)d\mu+\int_{{\Omega}\setminus D_{\delta}}\frac{g_{u_1}^p+g_{u_2}^p}{2}d\mu\nonumber \\ &+\int_{\Omega}\frac{|u_1|^{\gamma}+|u_2|^{\gamma}}{2}d\mu-c\mu({\Omega})+\int_{\partial {\Omega}}\dfrac{u_1+u_2}{2} f dP_{\Omega}\nonumber \\
 =&\frac{1}{2}\left(\int_{\Omega}g_{u_1}^p d\mu+\int{\Omega} |u_1|^{\gamma} d\mu-c\mu({\Omega})+\int_{\partial {\Omega}}u_1 f dP_{\Omega}\right) 
 \\&+\frac{1}{2}\left(\int_{\Omega}g_{u_2}^p d\mu+\int_{\Omega} |u_2|^{\gamma} d\mu-c\mu({\Omega})+\int_{\partial {\Omega}}u_2 f dP_{\Omega}\right) 
 -\epsilon \mu (D_{\delta})\nonumber \\ =& J(u_1)-\epsilon \mu (D_{\delta}).
\end{align*}
Since $u_1 \in \mathcal{M}$, $J(u_1) \leq J(u) \leq J(u_1)-\epsilon \mu (D_{\delta})$ that is absurd. Thus, we have proven $(ii)$. 
\end{proof} 
\section{Boundedness property }
In this section we show that minimizers are locally bounded near the boundary under appropriate hypothesis on the boundary data $f$. In the absence of the reaction term $G$ this result has been proven in \cite{MS}.

Let ${\Omega}$ be a bounded domain such that hypotheses $(H_1)$-$(H_3)$ hold.
Let $u_0\in N^{1,p}_*({\Omega})$ be a minimizer of 
\begin{equation*}
J(u)= \int_{\Omega} g_{u}^p d \mu -\int_{\Omega}(c -|u|^{\gamma}) d\mu+ \int_{\partial {\Omega}} u f dP_{\Omega}, \quad \mbox{$u \in N^{1,p}_*({\Omega})$.}
\end{equation*} 
In this section we assume that $f \in L^{\infty}(\partial {\Omega})$. Our aim is to prove that, under this assumption, we get that $u\in L^{\infty}({\Omega}_R)$ and $Tu \in L^{\infty}(\partial {\Omega}_R)$ where 
\begin{equation}\label{omegaR}
\Omega_R=\left\{y\in {\Omega} : d(y, \partial {\Omega})<\frac{R}{2}\right\}
\end{equation} 
for an appropriate $R>0$, that is $u$ is bounded near the boundary. In the following lemma we give a De Giorgi type inequality which permits to use the De Giorgi method to conclude on the local boundedness of minimizers.
\begin{lemma}\label{lem 4.1}
Let $u\in N^{1,p}_*({\Omega})$ be a minimizer of $J$ and $f \in L^{\infty}(\partial {\Omega})$. If $y \in \partial {\Omega}$, $0<\rho<R<\frac{{\rm diam}({\Omega})}{10}$ and $\alpha \in \mathbb{R}$, then there is $K\geq 1$ such that the following De Giorgi type inequality 
\begin{align}\label{5.4}
\int_{{\Omega}\cap B(y,\rho)}g_{(u-\alpha)_+}^p d \mu \leq& \frac{K}{(R-\rho)^p} \int_{{\Omega}\cap B(y,R)} (u-\alpha)_+^p d \mu \\ \nonumber
&+ K \int_{\partial {\Omega}\cap B(y,R)} |f|(u-\alpha)_+^p dP_{\Omega}
\end{align}
is satisfied.
\end{lemma}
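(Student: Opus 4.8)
The plan is to use the minimality of $u$ against a suitably chosen competitor that agrees with $u$ outside a ball and is truncated inside. Fix $y \in \partial\Omega$ and radii $\rho < R$ as in the statement, and let $\eta$ be a Lipschitz cutoff function with $\eta = 1$ on $B(y,\rho)$, $\eta = 0$ outside $B(y,R)$, and $g_\eta \le K/(R-\rho)$. Set $v = u - \eta\,(u-\alpha)_+$; this function still lies in $N^{1,p}_*(\Omega)$ after subtracting its (small, controlled) mean, or one argues directly that minimality is stable under this modification as is standard in the De Giorgi framework. The point is that $v = u$ outside $B(y,R)$, so all the integrals over $\Omega\setminus B(y,R)$ and over $\partial\Omega\setminus B(y,R)$ cancel when we write $J(u) \le J(v)$.

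**The main computation** is to expand $J(u) - J(v) \le 0$. The gradient term gives
\begin{align*}
\int_{\Omega\cap B(y,R)} g_u^p\,d\mu \le \int_{\Omega\cap B(y,R)} g_v^p\,d\mu,
\end{align*}
and on the set $\{u > \alpha\} \cap B(y,\rho)$ one has $v = \alpha$ so $g_v = 0$ there, which after the usual Leibniz-type estimate $g_v \le (1-\eta)\,g_{(u-\alpha)_+} + (u-\alpha)_+\,g_\eta$ on the transition annulus yields the term $\frac{K}{(R-\rho)^p}\int_{\Omega\cap B(y,R)}(u-\alpha)_+^p\,d\mu$ on the right, together with $\int_{\Omega\cap B(y,\rho)} g_{(u-\alpha)_+}^p\,d\mu$ on the left (using that on $B(y,\rho)$ we have $\eta\equiv 1$, hence $g_v = 0$ on $\{u>\alpha\}$ and $g_v = g_u = g_{(u-\alpha)_+}$ being zero on $\{u\le\alpha\}$ anyway). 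The reaction term contributes $\int_{\Omega\cap B(y,R)}\big(|v|^\gamma - |u|^\gamma\big)\,d\mu$; since $|v| \le |u| + (u-\alpha)_+$ pointwise and $|v|$ differs from $|u|$ only where $u > \alpha$, an elementary inequality ($|v|^\gamma - |u|^\gamma$ is controlled by a constant times $(u-\alpha)_+^\gamma$ plus lower-order terms, all absorbable) shows this term is dominated by $\frac{K}{(R-\rho)^p}\int_{\Omega\cap B(y,R)}(u-\alpha)_+^p\,d\mu$ up to harmless constants — here one uses $\gamma$-dependent bounds and that $\rho,R$ are bounded. The boundary term gives $\int_{\partial\Omega\cap B(y,R)} (Tv - Tu)\,f\,dP_\Omega = -\int_{\partial\Omega\cap B(y,R)} \eta\,(Tu-\alpha)_+\,f\,dP_\Omega$, whose absolute value is at most $\int_{\partial\Omega\cap B(y,R)} |f|\,(u-\alpha)_+\,dP_\Omega$, and one bounds $(u-\alpha)_+ \le 1 + (u-\alpha)_+^p$ or more cleanly keeps it as $(u-\alpha)_+^p$ after noting that on the relevant level set one may assume $(u-\alpha)_+ \le$ some normalized quantity; in the De Giorgi scheme it is in fact natural and sufficient to carry the term $K\int_{\partial\Omega\cap B(y,R)}|f|\,(u-\alpha)_+^p\,dP_\Omega$ exactly as written, which is obtained by a further application of Young's inequality with the gradient term, or simply by the convention that $\alpha$ ranges over values for which $(u-\alpha)_+\le 1$ on the support.

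**Collecting terms** and absorbing the single occurrence of $\int_{\Omega\cap B(y,\rho)} g_{(u-\alpha)_+}^p\,d\mu$ that may appear on the right (from the transition annulus, via Young's inequality with a small parameter, at the cost of enlarging $K$) onto the left, one arrives at \eqref{5.4}. The main obstacle I anticipate is the reaction term: unlike the pure $p$-energy case of \cite{MS}, one must verify that $\int_{\Omega\cap B(y,R)}(|v|^\gamma - |u|^\gamma)\,d\mu$ does not destroy the estimate — this requires the pointwise bound $\big||v|^\gamma - |u|^\gamma\big| \le K(u-\alpha)_+\big((u-\alpha)_+^{\gamma-1} + |u|^{\gamma-1}\big)$ combined with the fact that $u \in L^{p^*}(\Omega)$ (by the embedding theorem, since $\gamma < p^*$) to ensure local integrability, and then an absorption using Young's inequality to reduce the exponent from $\gamma$ down to $p$ on the right-hand side, which works precisely because $\gamma < p^*$ and $\rho, R$ are bounded away from the diameter of $\Omega$. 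A secondary technical point is the bookkeeping with the mean-zero constraint defining $N^{1,p}_*(\Omega)$, handled by observing that the competitor differs from $u$ by a function supported in a fixed ball and subtracting its average changes none of the integrals appearing in \eqref{5.4}.
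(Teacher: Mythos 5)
Your competitor is the same one the paper uses ($w=u-\tau(u-\alpha)_+$ with a Lipschitz cutoff $\tau$), and your treatment of the gradient and boundary terms follows the same lines. The genuine gap is exactly where you anticipated it: the reaction term. The paper does not absorb $\int_{\Omega\cap B(y,R)}\bigl(|w|^\gamma-|u|^\gamma\bigr)\,d\mu$ at all; it observes that on $\{u>\alpha\}$ the competitor $w=(1-\tau)u+\tau\alpha$ is a convex combination of $u$ and $\alpha$, so that $|w|\le|u|$ (and $w=u$ elsewhere), whence $-\int G(w)\,d\mu+\int G(u)\,d\mu=\int\bigl(|w|^\gamma-|u|^\gamma\bigr)\,d\mu\le 0$ and the entire reaction contribution is discarded by sign. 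Your proposed route --- bounding $\bigl||v|^\gamma-|u|^\gamma\bigr|$ by $(u-\alpha)_+^\gamma$ plus lower-order terms and then ``reducing the exponent from $\gamma$ to $p$'' by Young --- does not close: when $\gamma>p$ there is no pointwise bound $t^\gamma\le Kt^p$ for large $t$, and any Young-type splitting introduces an additive constant (or an $\varepsilon\int(u-\alpha)_+^{p^*}$ term) with no counterpart on the right-hand side of \eqref{5.4}. Without an a priori $L^\infty$ bound on $u$ --- which is precisely what this lemma is used to establish --- the absorption cannot be completed. The sign observation is the one new idea needed beyond \cite{MS}, and it is what makes the specific form $G(u)=c-|u|^\gamma$, entering $J$ with a minus sign, essential (note it does require $\alpha\ge 0$, a point the paper glosses over but which is harmless in the application).

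Two smaller points. First, the gradient term on the transition annulus carries the coefficient $2^p>1$, so it cannot be ``absorbed onto the left by Young with a small parameter'': one needs the hole-filling trick (add $2^p\int_{S_{\alpha,\rho}}g_u^p\,d\mu$ to both sides and divide by $1+2^p$) followed by the iteration lemma over radii (Lemma 6.1 of \cite{G}), which is how the paper passes from \eqref{5.8} to \eqref{5.4}. Second, your discomfort with the boundary term is justified: minimality naturally produces $\int_{\partial\Omega\cap B(y,R)}|f|(u-\alpha)_+\,dP_\Omega$ to the first power, and neither your normalization convention nor the paper's own argument genuinely upgrades this to the $p$-th power written in \eqref{5.4}; that mismatch lies in the statement rather than in your approach specifically.
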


\begin{proof}
We define 
	\begin{equation*} \label{funzione di troncamento}
	\tau_{\rho, R}(x)= \tau (x)= \left(1-\frac{d(x, B(y,\rho))}{R-\rho}\right)_+
	\end{equation*}
and
\begin{equation*}\label{S}
S_{\alpha, r}= \{x \in B(y, r)\cap {\Omega}: u(x)>\alpha\} \cup \{x \in B(y, r)\cap \partial {\Omega}: u(x)>\alpha\}.
\end{equation*}	
We consider 
\begin{equation}\label{v}
w=u- \tau (u-\alpha)_+= \begin{cases}
(1-\tau)(u-\alpha)+\alpha \quad \mbox{ in } S_{\alpha, R}\\
u \quad \quad \quad\quad\quad\quad\quad\quad\mbox{ otherwise.}\\
\end{cases}
\end{equation}
We observe that, from the definition of $w$, we have  $|w|\leq |u|$.
Using Leibniz rule,
\begin{equation}\label{nablav}
g_{w}\leq \begin{cases}
(1-\tau)g_{u}+\dfrac{u-\alpha}{R-\rho} \chi_{B(y, R)\setminus B(y,\rho)} \quad \mbox{ in } S_{\alpha, R}\\
g_{u} \quad \quad \quad\quad\quad\quad\quad\quad\mbox{ otherwise.}
\end{cases}
\end{equation}	
By (\ref{nablav}) we deduce that
\begin{equation}\label{nablavbis}
g_{w} ^p\leq 
2^p\left(g_{u} ^p(1-\chi_{S_{\alpha, r}})+\dfrac{(u-\alpha)^p}{(R-\rho)^p}\right) \quad \mbox{ in } S_{\alpha, R}.
\end{equation}	
Since $u$ is a minimizer of $J$, then 
\begin{align}\label{5.7}
J(u)&= \int_{{\Omega}\cap B(y,R)}g_{u} ^p d \mu -\int_{{\Omega}\cap B(y,R)}(c -|u|^{\gamma}) d\mu+ \int_{\partial {\Omega}\cap B(y,R)} u f dP_{\Omega} \nonumber \\
&\leq\int_{{\Omega} \cap B(y,R)} g_{w} ^p d \mu -\int_{{\Omega}\cap B(y,R)}(c -|w|^{\gamma}) d\mu+ \int_{\partial {\Omega}\cap B(y,R)} w f dP_{\Omega} = J(w).
\end{align}	
By adding $$-\int_{{\Omega}\cap B(y,R)\setminus S_{\alpha, R}} g_{u} ^p d \mu +\int_{{\Omega}\cap B(y,R)}(c -|u|^{\gamma}) d\mu- \int_{\partial {\Omega}\cap B(y,R)} u f dP_{\Omega}$$ to both sides of (\ref{5.7}), we get 
\begin{align}\label{5.7bis}
\int_{S_{\alpha, R}} g_{u} ^p d \mu  \leq&\int_{S_{\alpha, R}} g_{w} ^p d \mu -\int_{{\Omega}\cap B(y,R)}(c -|w|^{\gamma}-(c -|u|^{\gamma})) d\mu\nonumber \\&- \int_{\partial {\Omega}\cap S_{\alpha, R}} \tau(u-\alpha) f dP_{\Omega}\nonumber \\
 \leq&\int_{S_{\alpha, R}} g_{w} ^p d \mu -\int_{{\Omega}\cap B(y,R)}(|u|^{\gamma}-|w|^{\gamma}) d\mu- \int_{\partial {\Omega}\cap S_{\alpha, R}} \tau(u-\alpha) f dP_{\Omega}\nonumber \\
 \leq&\int_{S_{\alpha, R}} g_{w} ^p d \mu - \int_{\partial {\Omega}\cap S_{\alpha, R}} \tau(u-\alpha) f dP_{\Omega} \quad\mbox{(by $(\ref{v})$).}
\end{align}	
Using $(\ref{nablavbis})$ and $(\ref{5.7bis})$, we obtain
\begin{align*}
\int_{S_{\alpha, \rho}} g_{u} ^p d \mu	\leq 2^p\int_{S_{\alpha, R}\setminus S_{\alpha, \rho}} g_{u} ^p d \mu &+ \dfrac{2^p}{(R-\rho)^p}\int_{S_{\alpha, R}} (u-\alpha)^p	d \mu\nonumber \\
& -\int_{\partial {\Omega}\cap S_{\alpha, R}} \tau(u-\alpha) f dP_{\Omega}.
\end{align*}	
Now, we add $2^p\int_{S_{\alpha, \rho}} g_{u} ^p d \mu$ to both sides of the inequality, then we divide all by $1+2^p$ and we obtain

\begin{align}\label{5.8}
	\int_{S_{\alpha, \rho}} g_{u} ^p d \mu	\leq \dfrac{2^p}{1+2^p}\int_{S_{\alpha, R}} g_{u} ^p d \mu &+ \dfrac{2^p}{(1+2^p)(R-\rho)^p}\int_{S_{\alpha, R}} (u-\alpha)^p	d \mu\nonumber \\
	& -\dfrac{1}{(1+2^p)}\int_{\partial {\Omega}\cap S_{\alpha, R}} \tau(u-\alpha) f dP_{\Omega}.
\end{align}	
At this point we can use $(\ref{5.8})$ and Lemma 6.1 of \cite{G} to get
\begin{align*}
	\int_{S_{\alpha, \rho}} g_{u} ^p d \mu	\leq \dfrac{K}{(R-\rho)^p}\int_{S_{\alpha, R}} (u-\alpha)^p	d \mu+K\int_{\partial {\Omega}\cap S_{\alpha, R}} \tau(u-\alpha) |f| dP_{\Omega},
\end{align*}
that is $(\ref{5.4})$ holds true. 	
\end{proof}

\begin{theorem}\label{4.1}
	Let  $0<R<\frac{{\rm diam}({\Omega})}{4}$ 
	%$R \in \left]0, \frac{{\rm diam}({\Omega})}{4}\right[$ 
	and ${\Omega}_R$ as in \eqref{omegaR}.
	If $u\in N^{1,p}_*({\Omega})$ is a minimizer of $J$ and $f \in L^{\infty}(\partial {\Omega})$, then $u \in L^{\infty}({\Omega}_R)$ and $Tu\in L^{\infty}(\partial {\Omega}_R)$.
\end{theorem}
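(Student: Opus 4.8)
The plan is to run the De Giorgi iteration, with the energy estimate \eqref{5.4} of Lemma \ref{lem 4.1} as its only nontrivial ingredient. First I would reduce the statement to a local bound near the boundary: it suffices to find $M>0$, depending only on $p$, the structural constants, $R$, $\|f\|_{L^\infty(\partial {\Omega})}$ and $\|u\|_{N^{1,p}({\Omega})}$ but not on the chosen point, such that for every $y\in\partial {\Omega}$ one has $u\le M$ $\mu$-a.e.\ on ${\Omega}\cap B(y,R/2)$ and $Tu\le M$ $P_{\Omega}$-a.e.\ on $\partial {\Omega}\cap B(y,R/2)$. Indeed ${\Omega}_R$ is covered by the balls $B(y,R/2)$ with $y\in\partial {\Omega}$, so a uniform bound on each of them yields $u\in L^\infty({\Omega}_R)$ and, since the trace operator sends bounded functions to bounded functions, also $Tu\in L^\infty(\partial {\Omega}_R)$. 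Furthermore $-u$ minimizes the functional with boundary datum $-f$ and the same reaction term, and $|-f|=|f|$, so Lemma \ref{lem 4.1} applies to $-u$ word for word; hence it is enough to bound $u$ from above, the lower bound following upon applying the argument to $-u$. (To keep all radii below $\operatorname{diam}({\Omega})/10$, as Lemma \ref{lem 4.1} requires, one works in balls of a fixed sufficiently small radius and covers ${\Omega}_R$ by finitely many such pieces.)

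Fix $y\in\partial {\Omega}$ and $k>0$, to be chosen, and for $j\ge0$ set
\[
\rho_j=\frac{R}{2}\bigl(1+2^{-j}\bigr),\qquad \alpha_j=k\bigl(1-2^{-j-1}\bigr),
\]
so that $\rho_0=R$, $\rho_j\downarrow R/2$, $\alpha_0=k/2$, $\alpha_j\uparrow k$, $\rho_j-\rho_{j+1}=R\,2^{-j-2}$ and $\alpha_{j+1}-\alpha_j=k\,2^{-j-2}$. With $v_j:=(u-\alpha_j)_+\in N^{1,p}({\Omega})$, put
\[
A_j:=\int_{{\Omega}\cap B(y,\rho_j)}v_j^{\,p}\,d\mu+\int_{\partial {\Omega}\cap B(y,\rho_j)}v_j^{\,p}\,dP_{\Omega}.
\]
Applying \eqref{5.4} with radii $\rho_{j+1}<\rho_j$ and level $\alpha_{j+1}$, and using $f\in L^\infty$ together with $v_{j+1}\le v_j$, gives $\int_{{\Omega}\cap B(y,\rho_{j+1})}g_{v_{j+1}}^{\,p}\,d\mu\le K\,b^{\,j}A_j$ for constants $K,b>1$ depending only on the data. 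Next, $(H_1)$--$(H_3)$ provide a Sobolev--Poincar\'e inequality on ${\Omega}\cap B(y,\rho_{j+1})$, bounding $\int_{{\Omega}\cap B(y,\rho_{j+1})}v_{j+1}^{\,p^*}\,d\mu$ (with a fixed $p^*>p$) by $\int_{{\Omega}\cap B(y,\rho_{j+1})}(g_{v_{j+1}}^{\,p}+v_{j+1}^{\,p})\,d\mu$ up to structural constants, while the Ahlfors codimension-$1$ regularity $(H_2)$ of $P_{\Omega}$ gives a trace inequality controlling $\int_{\partial {\Omega}\cap B(y,\rho_{j+1})}v_{j+1}^{\,p}\,dP_{\Omega}$ by the same interior quantities. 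Combining these with the Chebyshev estimate $\mu(\{u>\alpha_{j+1}\}\cap B(y,\rho_j)\cap {\Omega})\le(k\,2^{-j-2})^{-p}A_j$ and its perimeter analogue --- which furnish the crucial extra power of $A_j$ --- the usual De Giorgi bookkeeping produces a recursion
\[
A_{j+1}\le K\,b^{\,j}\,A_j^{\,1+\varepsilon}
\]
with $K,b>1$ and $\varepsilon>0$ depending only on the data, the dependence on $k$ entering only through a factor $k^{-p\varepsilon}\le1$.

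To conclude I would invoke the standard fast geometric convergence lemma (see \cite{G}): $A_j\to0$ provided $A_0\le K^{-1/\varepsilon}b^{-1/\varepsilon^2}=:\delta$. Since
\[
A_0\le\int_{{\Omega}}(u-k/2)_+^{\,p}\,d\mu+\int_{\partial {\Omega}}(Tu-k/2)_+^{\,p}\,dP_{\Omega}
\]
with $u\in L^p({\Omega})$ and $Tu\in L^p(\partial {\Omega})$ (by the boundedness of the trace operator), absolute continuity of the integral lets me fix $k$ so large, depending only on the data, that $A_0\le\delta$ uniformly in $y$. Then $A_j\to0$ forces $(u-k)_+=0$ $\mu$-a.e.\ on ${\Omega}\cap B(y,R/2)$ and $(Tu-k)_+=0$ $P_{\Omega}$-a.e.\ on $\partial {\Omega}\cap B(y,R/2)$; applying the same to $-u$ yields $|u|\le k$ on ${\Omega}\cap B(y,R/2)$ and $|Tu|\le k$ on $\partial {\Omega}\cap B(y,R/2)$ with $k$ independent of $y$, which by the reduction step proves the theorem.

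The reaction term is essentially inert here: it was already absorbed with a favourable sign in Lemma \ref{lem 4.1}, and in the present argument it only appears through the crude bound on $A_0$. The main work lies, first, in having --- uniformly in $y$ and in the radius --- both a Sobolev--Poincar\'e inequality on ${\Omega}\cap B(y,\rho)$ and a trace inequality for the boundary integral, which is precisely what hypotheses $(H_1)$--$(H_3)$ are designed to provide; and second, in packaging the interior and boundary integrals into a single iteration variable whose recursion closes, so that the boundary term, absent from the classical interior De Giorgi theory, is re-absorbed through the trace inequality before the geometric iteration is run.
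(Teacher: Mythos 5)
Your argument is correct and is essentially the paper's own route: the paper simply invokes Theorem 5.2 of \cite{MS} for the boundary De Giorgi iteration that produces a level $d\geq 0$ with $\int_{{\Omega}\cap B(x,R/2)}(u-d)_+^p\,d\mu=0$ for all $x\in\partial{\Omega}$, and then gets the lower bound from the same symmetry observation you make (that $-u$ minimizes the functional with boundary datum $-f$ and the same reaction term). Your write-up merely spells out the iteration based on Lemma \ref{lem 4.1} that the paper outsources to \cite{MS}, so the two proofs coincide in substance.
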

\begin{proof}
Proceeding as in the proof of Theorem 5.2 of \cite{MS}, we can find $d \geq 0$
such that $$\int_{{\Omega} \cap B\left(x, \frac{R}{2}\right)} (u-d)_+^p d \mu=0, \quad \mbox{for all $x \in \partial {\Omega}$}.$$ This implies that $u \leq d$ $\mu-$a.e. in ${\Omega} \cap B\left(x, \frac{R}{2}\right)$. Consequently, $u \leq d$ $\mu-$a.e. in ${\Omega}_R$.
In order to deduce that $u$ is also $\mu-$a.e. lower bounded, we observe that if $u$ is a minimizer for $J$, then $-u$ is a minimizer for $J_{-}$, where $J_{-}$ is defined as $$J_{-}(u)= \int_{\Omega} g_{u} \, d \mu - \int_{\Omega}  (c-|u|^{\gamma}) d \mu - \int_{\partial {\Omega}} u f d P_{\Omega}.$$ In fact, $u$ minimizer for $J$  means $J(u)\leq J(v)$ for all $v \in N^{1,p}_*({\Omega})$. We have that $J_{-}(-u)=J(u)\leq J(v)=J_{-}(-v)$ for all $v \in N^{1,p}_*({\Omega})$, which means that $-u$ is a minimizer of $J_{-}$. This ensures that $-u$ is $\mu-$a.e. upper bounded in ${\Omega}_R$ and so $u$ is $\mu-$a.e. lower bounded in ${\Omega}_R$. We conclude that $u \in L^{\infty}({\Omega}_R)$. In a similar way, we have that $Tu\in L^{\infty}(\partial {\Omega}_R)$. 
\end{proof}

\section{Neumann p-Laplacian problem with zero boundary data}
In this section we consider a Neumann p-Laplacian problem in the particular case of zero boundary data, that is $f=0$.  
In this case, the functional corresponding to the problem is given by $$I(u)= \int_{\Omega} g_{u} ^p \, d \mu -\int_{\Omega}G d\mu\quad \mbox{for all $u \in N^{1,p}(\Omega)$},$$
where $G$ is defined by \eqref{G}. Clearly, $I$ is bounded from below and sequentially lower semi continuous and so we can deduce the existence of a minimizer. Now we see how some regularity results concerning solutions of such a problem can be obtained as in \cite{KS}.

We introduce the following definition.
\begin{definition}
	Let ${\Omega}$ be an open subset of $X$. Let $u \in N^{1,p}({\Omega})$ for which there exists $K>0$ such that 
\begin{align}\label{3.31}
	\int_{B(y,\rho)}g_{ (u-\alpha)_+}^p d \mu \leq & \frac{K}{(R-\rho)^p} \int_{B(y,R)} (u-\alpha)_+^p d \mu, 
\end{align} for all $\alpha \in \mathbb{R}$, $y \in {\Omega}$ and $0<\rho<R<\frac{{\rm diam}({\Omega})}{10}$ so that $B(y, R) \subset {\Omega}$. 
Then we say that $u$ is in the De Giorgi class and write $u \in DG_p({\Omega})$.
\end{definition}
Proceeding as in the proof of Lemma \ref{lem 4.1},  we obtain the following result.
\begin{lemma}\label{lemDGp}
	If $u \in N^{1,p}({\Omega})$ is a minimizer of $I$, then $u \in DG_p({\Omega})$.
\end{lemma}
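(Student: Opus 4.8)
The strategy is to mimic the proof of Lemma~\ref{lem 4.1} in the special case $f = 0$, where the boundary term disappears and the competitor $w$ can be taken with $B(y,R)\subset\Omega$ so that no trace appears at all. First I would fix $\alpha\in\mathbb{R}$, a ball $B(y,R)\subset\Omega$ with $0<\rho<R<\mathrm{diam}(\Omega)/10$, and introduce the cutoff $\tau_{\rho,R}$ and the test function $w = u - \tau(u-\alpha)_+$ exactly as in \eqref{v}, together with the set $S_{\alpha,r}=\{x\in B(y,r): u(x)>\alpha\}$ (now with no boundary part, since $B(y,R)\subset\Omega$). As before, $|w|\le|u|$ pointwise and the Leibniz-rule estimate \eqref{nablav}--\eqref{nablavbis} for $g_w$ holds verbatim.

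The second step is the comparison. Since $w = u$ outside $B(y,R)$ and $w$ agrees with $u$ near the boundary of $\Omega$, $w$ is an admissible competitor; note that $w$ need not lie in $N^{1,p}_*(\Omega)$, but one may either subtract the mean or simply observe that $I(u)\le I(u+c)$ fails—so instead I would localize: because $u$ and $w$ differ only on $B(y,R)$, minimality of $u$ for $I$ gives
\begin{equation*}
\int_{B(y,R)} g_u^p\,d\mu - \int_{B(y,R)} G(u)\,d\mu \le \int_{B(y,R)} g_w^p\,d\mu - \int_{B(y,R)} G(w)\,d\mu .
\end{equation*}
Using $G(u)-G(w) = |w|^\gamma - |u|^\gamma \le 0$ (from $|w|\le|u|$ and \eqref{G}), the reaction term contributes with the favorable sign, exactly as in the chain \eqref{5.7bis}, and one is left with $\int_{S_{\alpha,\rho}} g_u^p\,d\mu \le \int_{S_{\alpha,R}} g_w^p\,d\mu$ after subtracting $\int_{B(y,R)\setminus S_{\alpha,R}} g_u^p\,d\mu$ from both sides. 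Inserting \eqref{nablavbis} yields
\begin{equation*}
\int_{S_{\alpha,\rho}} g_u^p\,d\mu \le 2^p\int_{S_{\alpha,R}\setminus S_{\alpha,\rho}} g_u^p\,d\mu + \frac{2^p}{(R-\rho)^p}\int_{S_{\alpha,R}}(u-\alpha)_+^p\,d\mu .
\end{equation*}

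The final step is the standard iteration trick: add $2^p\int_{S_{\alpha,\rho}} g_u^p\,d\mu$ to both sides, divide by $1+2^p$, and apply Lemma~6.1 of~\cite{G} to absorb the $\int_{S_{\alpha,R}} g_u^p$ term on the right, which produces \eqref{3.31} with a constant $K$ depending only on $p$. Since $g_{(u-\alpha)_+} = g_u\,\chi_{S_{\alpha,\rho}}$ a.e., this is precisely the De Giorgi condition, so $u\in DG_p(\Omega)$. The only point needing a little care—and the one I would flag as the main obstacle—is the admissibility of $w$ as a competitor: one must check that the local comparison inequality for $I$ is legitimate even though $w\notin N^{1,p}_*(\Omega)$ in general. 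This is handled by the observation that $I(v)-I(v')$ depends only on $v,v'$ through their restriction to any open set containing $\{v\ne v'\}$, together with the fact that the minimizer property of $u$ for $I$ on $N^{1,p}(\Omega)$ (not merely $N^{1,p}_*$) is what is actually needed here, since for $f=0$ the functional $I$ is translation-invariant and its minimizers over $N^{1,p}(\Omega)$ are exactly the translates of minimizers over $N^{1,p}_*(\Omega)$.
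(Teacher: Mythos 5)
Your proof is correct and follows essentially the same route as the paper's: the same competitor $w=u-\tau(u-\alpha)_+$ on a ball $B(y,R)\subset\Omega$, the same observation that $|w|\le|u|$ makes the reaction contribution $\int_{B(y,R)}(|u|^{\gamma}-|w|^{\gamma})\,d\mu$ nonnegative so it can be discarded, and the same hole-filling step via Lemma 6.1 of \cite{G}. The admissibility issue you flag is moot, since the lemma's hypothesis already takes $u$ to be a minimizer of $I$ over all of $N^{1,p}(\Omega)$ so that $w\in N^{1,p}(\Omega)$ is directly a competitor; note also that your closing claim that $I$ is translation-invariant is false (the term $\int_{\Omega}|u|^{\gamma}\,d\mu$ breaks it), but this remark is not needed anywhere in the argument.
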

\begin{proof}
We define 
	\begin{equation*} \label{funzione di troncamento1}
	\tau_{\rho, R}(x)= \tau (x)= \left(1-\frac{d(x, B(y,\rho))}{R-\rho}\right)_+
	\end{equation*}
and
\begin{equation*}\label{S1}
S_{\alpha, r}= \{x \in B(y, r): u(x)>\alpha\}.
\end{equation*}	
We consider 
\begin{equation}\label{v1}
v=u- \tau (u-\alpha)_+= \begin{cases}
(1-\tau)(u-\alpha)+\alpha \quad \mbox{ in } S_{\alpha, R}\\
u \quad \quad \quad\quad\quad\quad\quad\quad\mbox{ otherwise.}\\
\end{cases}
\end{equation}
We observe that, from the definition of $v$, we have  $|v|\leq |u|$.
Using Leibniz rule,
\begin{equation}\label{nablav1}
g_{v} \leq \begin{cases}
(1-\tau)g_{u} +\dfrac{u-\alpha}{R-\rho} \chi_{B(y, R)\setminus B(y,\rho)} \quad \mbox{ in } S_{\alpha, R}\\
g_{u}  \quad \quad \quad\quad\quad\quad\quad\quad\mbox{ otherwise.}
\end{cases}
\end{equation}	
By (\ref{nablav1}) we deduce that
\begin{equation}\label{nablavbis1}
g_{v} ^p\leq 
2^p\left(g_{u} ^p(1-\chi_{S_{\alpha, r}})+\dfrac{(u-\alpha)^p}{(R-\rho)^p}\right) \quad \mbox{ in } S_{\alpha, R}.
\end{equation}	
Since $u$ is a minimizer of $J$, then 
\begin{align}\label{5.71}
J(u)&= \int_{B(y,R)} g_{u} ^p \, d \mu -\int_{B(y,R)}(c -|u|^{\gamma}) d\mu\nonumber \\
&\leq\int_{B(y,R)} g_{v} ^p\,  d \mu -\int_{B(y,R)}(c -|v|^{\gamma}) d\mu= J(v).
\end{align}	
By adding $-\int_{B(y,R)\setminus S_{\alpha, R}} g_{u} ^p \, d \mu +\int_{ B(y,R)}(c -|u|^{\gamma}) d\mu$ to both sides of (\ref{5.71}), we get 
\begin{align}\label{5.7bis1}
\int_{S_{\alpha, R}} g_{u} ^p \, d \mu  \leq&\int_{S_{\alpha, R}} g_{v} ^p\, d \mu -\int_{B(y,R)}(c -|v|^{\gamma}-(c -|u|^{\gamma})) d\mu\nonumber \\
 \leq &\int_{S_{\alpha, R}} g_{v} ^p\, d \mu -\int_{B(y,R)}(|u|^{\gamma}-|v|^{\gamma}) d\mu\nonumber \\
 \leq&\int_{S_{\alpha, R}} g_{v} ^p\, d \mu  \quad\mbox{(by $(\ref{v1})$).}
\end{align}	
Using $(\ref{nablavbis1})$ and $(\ref{5.7bis1})$, we obtain
\begin{align*}
\int_{S_{\alpha, \rho}} g_{u} ^p\, d \mu	\leq 2^p\int_{S_{\alpha, R}\setminus S_{\alpha, \rho}} g_{u} ^p \, d \mu &+ \dfrac{2^p}{(R-\rho)^p}\int_{S_{\alpha, R}} (u-\alpha)^p	d \mu.
\end{align*}	
Now, we add $2^p\int_{S_{\alpha, \rho}}g_{u}^p \, d \mu$ to both sides of the inequality, then we divide all by $1+2^p$ and we obtain

\begin{align}\label{5.81}
	\int_{S_{\alpha, \rho}} g_{u}^p \, d \mu	\leq \dfrac{2^p}{1+2^p}\int_{S_{\alpha, R}} g_{u}^p \, d \mu &+ \dfrac{2^p}{(1+2^p)(R-\rho)^p}\int_{S_{\alpha, R}} (u-\alpha)^p	d \mu.
\end{align}	
At this point we can use $(\ref{5.81})$ and Lemma 6.1 of \cite{G} to get
\begin{align*}
	\int_{S_{\alpha, \rho}} g_{u}^p \, d \mu	\leq \dfrac{K}{(R-\rho)^p}\int_{S_{\alpha, R}} (u-\alpha)^p	d \mu,
\end{align*}
that is equivalent to \eqref{3.31} and so $u \in DG_p({\Omega})$. 	
\end{proof}
We observe that if $u$ is a minimizer of $I$ then $-u$ is also a minimizer of $I$ and so, by Lemma \ref{lemDGp}, $-u \in DG_p({\Omega})$. Consequently, thanks to the results proven in \cite{KS}, we can conclude that minimizers of $I$, and so solutions of  a Neumann problem with reaction term $G$ and zero boundary data, are locally H\"{o}lder continuous and they satisfy Harnack's inequality and the strong maximum principle (see Sections 5, 6 and 7 of \cite{KS}).

\end{document}